\DeclareMathSymbol{\twoheadrightarrow}  {\mathrel}{AMSa}{"10}
        \def\GG{{\mathcal G}}
\def\Q{{\mathbb Q}}
\def\Z{{\mathbb Z}}
\def\C{{\mathbb C}}
\def\CC{{\mathfrak C}}
                   \def\sp{{\mathfrak{sp}}}
                   \def\sL{{\mathfrak{sl}}}
\def\RR{{\mathbb R}}
\def\F{{\mathbb F}}
\def\P{{\mathfrak P}}
                                    \def\XX{{\mathbf X}}
                                     \def\YY{{\mathbf Y}}
                                     \def\ZZ{{\mathbf Z}}
\def\f{{\tilde F}}
                     \def\f0{{\mathfrak f}}
             \def\K{\mathrm{K}}
\def\A8{{\mathbf A}_8}
\def\Alt{\mathrm{Alt}}
\def\RR{{\mathfrak R}}
\def\Perm{\mathrm{Perm}}
\def\Gal{\mathrm{Gal}}
\def\Pic{\mathrm{Pic}}
\def\End{\mathrm{End}}
\def\Aut{\mathrm{Aut}}
\def\I{{\mathrm{Id}}}
                                           \def\ss{{\mathrm{ss}}}
                                            \def\Proj{\mathrm{Proj}}
                              \def\rest{{\mathrm{rest}}}
\def\II{{{\mathcal I}}}
\def\ST{{\mathbf S}}
\def\fchar{\mathrm{char}}
\def\Sp{\mathrm{Sp}}
            \def\Gp{\mathrm{Gp}}
                                                                  \def\Spec{\mathrm{Spec}}
\def\A{\mathbf{A}}
                  \def\X{\mathcal{X}}
\def\T{{\mathrm T}}
\def\dim{\mathrm{dim}}
\def\Oc{{\mathcal O}}
       \def\PGL{\mathrm{PGL}}
      \def\g{{\mathfrak g}}
            \def\m{{\mathfrak m}}
                                   \def\gp{\mathfrak{gp}}
                                   \def\sp{\mathfrak{sp}}
                                     \def\sL{\mathfrak{sl}}
                         \def\Cc{\mathcal{C}}
                                               \def\PP{{\mathbf P}}
\newtheorem{thm}{Theorem}[section]
\newtheorem{lem}[thm]{Lemma}
\newtheorem{cor}[thm]{Corollary}
\theoremstyle{definition}
\newtheorem{defn}[thm]{Definition}
\newtheorem{ex}[thm]{Example}
\newtheorem{rem}[thm]{Remark}
\newtheorem{rems}[thm]{Remarks}
        \newtheorem{sect}[thm]{}
\title[Families of hyperelliptic jacobians]{Two-dimensional  families of  hyperelliptic jacobians with big monodromy}
\author[Yuri G.\ Zarhin]{Yuri G.\ Zarhin}
\thanks{This work was partially supported by a grant from the Simons Foundation (\#246625 to Yuri Zarkhin).}
\address{Department of Mathematics, Pennsylvania State University,
University Park, PA 16802, USA}
\address{Department of Mathematics, The Weizmann Institute of Science,
 P.O.B. 26,  Rehovot 7610001, Israel}
\email{zarhin\char`\@math.psu.edu}
\begin{document}

\begin{abstract}
Let $K$ be a global field of characteristic different from $2$ and $u(x)\in K[x]$ be an irreducible polynomial of even degree $2g\ge 6$, whose Galois group over $K$ is either the full symmetric group $\ST_{2g}$ or the alternating group $\A_{2g}$.  We describe explicitly how to choose (infinitely many) pairs of distinct  $t_1, t_2 \in K$  such that
 the $g$-dimensional  jacobian of a hyperelliptic curve $y^2=(x-t_1)(x-t_2))u(x)$ has no
nontrivial endomorphisms over an algebraic closure of $K$ and has big monodromy.
\end{abstract}

\subjclass[2010]{14H40, 14K05, 11G30, 11G10}

\maketitle

\section{Statements}
\label{intro}

 As usual, $\Z$, $\Q$ and $\C$ stand for the ring of integers, the
field of rational numbers
 and the field of complex numbers
respectively. If $\ell$ is a prime then we write $\F_{\ell},\Z_{\ell}$ and
$\Q_{\ell}$ for the ${\ell}$-element (finite) field, the ring of ${\ell}$-adic
integers and field of ${\ell}$-adic numbers respectively. If $A$ is a finite
set then we write $\#(A)$ for the number of its elements.

If $C$ is a commutative ring with $1$,  $V$ a free $C$-module of finite rank and
$e: V \times V \to C$ an alternating $C$-bilinear form then we write
$$\Sp(V,e) \subset \Gp(V,e) \subset \Aut_C(V)$$
for the symplectic group
$$\Sp(V,e) =\{u \in \Aut_C(V)\mid e(ux,uy)=e(x,y) \ \forall \ x,y \in V\}$$
and the group of symplectic similitudes
$\Gp(V,e)$ that consists of all automorphisms $u$ of $V$ such that there exists a {\sl constant}
$c=c(u) \in C^{*}$ such that
$$ e(ux,uy)=c \cdot e(x,y) \ \forall \ x,y \in V.$$

Let $K$ be a field of characteristic different from $2$, let $\bar{K}$ be its
algebraic closure and $\Gal(K)=\Aut(\bar{K}/K)$ its absolute Galois group.
If $L \subset \bar{K}$ is a finite separable algebraic extensiuon of $K$ then $\bar{K}$ is an algebraic closure of $L$ and $\Gal(L)=\Aut(\bar{K}/L)$ is an open subgroup of finite index in $\Gal(K)$; actually, the index equals degree $[L:K]$ of the field extension $L/K$.

Let
$n\ge 5$ be an integer, $f(x)\in K[x]$  a degree $n$ polynomial {\sl without
multiple roots}, $\RR_f \subset \bar{K}$ the $n$-element set of its roots,
$K(\RR_f) \subset \bar{K}$ the splitting field of $f(x)$ and
$\Gal(f)=\Gal(K(\RR_f)/K)$ the Galois group of $f(x)$ over $K$. One may view
$\Gal(f)$ as a certain group of permutations of $\RR_f$. Let $C_f: y^2=f(x)$ be
the corresponding hyperelliptic curve of genus $\lfloor(n-1)/2\rfloor$. Let
$J(C_f)$ be the jacobian of $C_f$; it is a $\lfloor(n-1)/2\rfloor$-dimensional
abelian variety that is defined over $K$.

Let $X$ be an abelian variety that is defined over $K$.
We write $\End(X)$ for the ring
of all
 $\bar{K}$-endomorphisms of $X$. As usual, we write $\End^{0}(X)$ for the corresponding
 (finite-dimensional semisimple) $\Q$-algebra $\End(X)\otimes\Q$.

If $m$ is a positive integer that is not divisible by $\fchar(K)$ then we write $X_m$ for the kernel of multiplication by $m$ in $X(\bar{K})$. It is well known that $X_m$ is a free $\Z/m\Z$-module of rank $2\dim(X)$ that is a Galois submodule of $X(\bar{K})$: we write
$$\bar{\rho}_{m,X}:\Gal(K) \to \Aut_{\Z/m\Z}(X_m)$$
for the corresponding structure homomorphism and
$$\tilde{G}_{m, X,K} \subset  \Aut_{\Z/m\Z}(X_m)$$
for its image.  A polarization $\lambda$  on $X$  that is defined over $K$ gives rise to the   Galois-equivariant alternating bilinear Riemann form
$$X_m\times X_m \to \mu_m$$
where $\mu_m$ is the cyclic group of all $m$th roots of unity in $\bar{K}$. Identifying (non-canonically)  $\mu_m$ with  $\Z/m\Z$, we may view the Riemann form  as an   alternating bilinear Riemann form
$$\bar{e}_{\lambda,m}: X_m \times X_m \to \Z/m\Z$$
such that
$$\bar{e}_{\lambda,m}(\sigma(x),\sigma(y))=\bar{\chi}_m(\sigma) \bar{e}_{\lambda,m}(x,y)$$
for all $x,y \in X_m$ and $\sigma \in \Gal(K)$ where
$$\bar{\chi}_{m}=\bar{\chi}_{m,K}: \Gal(K) \to (\Z/m\Z)^{*}$$
is the cyclotomic character that describes the Galois action on $m$th roots of unity.  (This form is nondegenerate if and only if $\deg(\lambda)$ and $m$ are relatively prime. In particular, if $\lambda$ is a principal polarization then $\bar{e}_{\lambda,m}$ is nondegenerate for all $m$.)
This implies that
$$\tilde{G}_{m, X,K}\subset
\Gp(X_m, \bar{e}_{\lambda,m})\subset     \Aut_{\Z/m\Z}(X_m).$$
Clearly, $\tilde{G}_{m, X,L}=\bar{\rho}_{m,X}(\Gal(K))$ is a subgroup of $\tilde{G}_{m, X,K}$ with index $\le [L:K]$.

If we choose a prime $\ell \ne \fchar(K)$, put $m=\ell^i$ and take the projective limit then we  get the Tate module $T_{\ell}(X)$ that is a free $\Z_{\ell}$-module of rank $2\dim(X)$ provided with the continuous Galois action ($\ell$-adic representation)
$$\rho_{\ell, X}:\Gal(K) \to \Aut_{\Z_{\ell}}(T_{\ell}(X))$$
and nondegenerate $\Z_{\ell}$-bilinear alternating Riemann form
$$e_{\lambda,\ell}: T_{\ell}(X) \times  T_{\ell}(X)  \to \Z_{\ell}$$
such that
$$e_{\lambda,\ell}(\sigma(x),\sigma(y))=\chi_{\ell}(\sigma) e_{\lambda}(x,y)$$
for all $x,y \in T_{\ell}(X)$ and $\sigma \in \Gal(K)$
where
$$\chi_{\ell}:\Gal(K) \to \Z_{\ell}^{*} \subset \Q_{\ell}^{*}$$
is the cyclotomic character that describes the Galois action on $\ell$-power roots of unity in $\bar{K}$.  (This form  is perfect if and only if $\deg(\lambda)$ is not divisible by $\ell$.)

 It follows that the image
$$G_{\ell,X,K}:=\rho_{\ell,X}(\Gal(K))\subset \Aut_{\Z_{\ell}}(T_{\ell}(X))$$
sits in the group $\Gp(T_{\ell}(X), e_{\lambda,\ell})$ of symplectic similitudes, i.e.,
$$G_{\ell,X,K}\subset \Gp(T_{\ell}(X), e_{\lambda,\ell}) \subset \Aut_{\Z_{\ell}}(T_{\ell}(X)).$$
Clearly,  $G_{\ell,X,L}:=\rho_{\ell,X}(\Gal(L))$ is a closed subgroup in $G_{\ell,X,K}$ with finite index $\le [L:K]$ and therefore is open in $G_{\ell,X,K}$.

In \cite[Th. 5.4 on p. 38]{ZarhinPLMS2}
 the author proved
the following statement.

\begin{thm}
\label{PLMSendo} Suppose that $\fchar(K)=0$ and $n=2g+2\ge 12$ is even. Assume also that
$f(x)=(x-t_1)(x-t_2) u(x)$ where
$$t_1, t_2 \in K, \ t_1 \ne t_2, \ u(x) \in K[x], \ \deg(u)=n-2=2g$$
and $\Gal(u)=\ST_{2g}$ or $\A_{2g}$.
 Then
 $\End(J(C_f))=\Z$. In particular, $J(C_f)$ is an absolutely simple abelian variety.
\end{thm}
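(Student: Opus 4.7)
The plan is to show that the $\ell$-adic Galois image $G_{\ell, J(C_f), K}$ is open in the symplectic group $\Sp(T_\ell J(C_f), e_{\lambda,\ell})$ for some odd prime $\ell$, where $\lambda$ is the canonical principal polarization of $J(C_f)$. Once this is established, Faltings' isogeny theorem gives $\End(J(C_f)) \otimes \Z_\ell = \End_{G_{\ell, J(C_f), K}}(T_\ell J(C_f))$, and the centralizer of any open subgroup of $\Sp(T_\ell J(C_f), e_{\lambda,\ell})$ inside $\End_{\Z_\ell}(T_\ell J(C_f))$ reduces to $\Z_\ell \cdot \I$. Since $\End(J(C_f))$ is a finitely generated torsion-free abelian group containing $\Z$, this forces $\End(J(C_f)) = \Z$, whence the absolute simplicity of $J(C_f)$.

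The first step is mod-$2$ surjectivity onto $\Sp_{2g}(\F_2)$. Because $t_1, t_2 \in K$, the splitting fields of $f$ and $u$ coincide, so $\Gal(f) \cong \Gal(u) \in \{\ST_{2g}, \A_{2g}\}$ acts on $\RR_f = \RR_u \sqcup \{t_1, t_2\}$ by permuting $\RR_u$ and fixing $t_1, t_2$. For even $n = 2g+2$, one has a canonical identification of $J(C_f)_2$ with the $2g$-dimensional symplectic $\F_2$-space $W/\F_2 \cdot \mathbf{1}_{\RR_f}$, where $W \subset \F_2^{\RR_f}$ is the even-weight subspace. A transposition $(ij)$ in $\Gal(u)$ then acts as the symplectic transvection based at the class of $\mathbf{1}_{\{i,j\}}$. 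In the symmetric case, the Mortimer--Kantor classification of subgroups of $\Sp_{2g}(\F_2)$ generated by transvections, applied with $g \ge 5$, yields the full group $\Sp_{2g}(\F_2)$. In the alternating case one invokes the same classification using double transpositions, which act as pairs of commuting transvections, to reach the same conclusion.

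The second step lifts mod-$2$ surjectivity to $\ell$-adic openness. A Serre--Hall type criterion asserts that if $G \subset \Sp_{2g}(\Z_\ell)$ surjects onto $\Sp_{2g}(\F_\ell)$ and contains a single symplectic transvection, then $G$ is open in $\Sp_{2g}(\Z_\ell)$. Mod-$\ell$ surjectivity for odd $\ell$ follows from Step $1$ by a parallel analysis of the mod-$\ell$ permutation module coming from $\RR_f$. The required transvection is produced arithmetically: after spreading out $(C_f, t_1, t_2, u)$ over a finitely generated $\Z$-subalgebra $R \subset K$ (possible because $\fchar(K) = 0$), choose a prime $\mathfrak{p}$ of $R$ not dividing $\ell \cdot \mathrm{disc}(u) \cdot (t_1 - t_2) \cdot u(t_2)$ but dividing $u(t_1)$; such $\mathfrak{p}$ exist since $u(t_1) \ne 0$. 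Then $f \bmod \mathfrak{p}$ has exactly one double root, $C_f$ acquires a single node modulo $\mathfrak{p}$, the N\'eron model of its jacobian has toric rank $1$, and the Picard--Lefschetz formula forces an inertia generator at $\mathfrak{p}$ to act on $T_\ell J(C_f)$ as a symplectic transvection.

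The main obstacle is Step $2$: the arithmetic construction of the transvection, which requires descending $J(C_f)$ to a finitely generated base in characteristic zero and verifying that the specialization at $\mathfrak{p}$ indeed has the required single-node form. Once both steps are carried out, the openness of $G_{\ell, J(C_f), K}$ in $\Sp(T_\ell J(C_f), e_{\lambda,\ell})$ combined with the Faltings-based argument of the opening paragraph yields $\End(J(C_f)) = \Z$ and absolute simplicity.
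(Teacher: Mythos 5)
Your strategy (prove big monodromy first via transvections, then deduce $\End=\Z$) is not the route the paper takes, and as written it has genuine gaps that the hypotheses of Theorem \ref{PLMSendo} do not let you close. The most serious one is Step 2: the theorem imposes \emph{no} arithmetic condition on $t_1,t_2$ beyond $t_1\ne t_2$, so there is no reason a prime $\mathfrak{p}$ dividing $u(t_1)$ (but not $\mathrm{disc}(u)(t_1-t_2)u(t_2)\ell$) exists. For example, over $K=\Q$ with $u,t_1,t_2$ integral one can have $u(t_1)=u(t_2)=\pm 1$ and $t_1-t_2=\pm 1$, and the only candidate primes are the divisors of $\mathrm{disc}(u)$, where the reduction need not have a single node of toric rank one. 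This is precisely why the present paper's Theorem \ref{mainENDO} \emph{adds} the hypotheses $t_1-t_2\in\m$, $u(t_i)\notin\m$: they manufacture the node and the toric-rank-one semiabelian reduction that your Picard--Lefschetz transvection requires. Theorem \ref{PLMSendo} is proved in \cite{ZarhinPLMS2} by an entirely different, purely $2$-adic representation-theoretic argument (reduce to odd degree by a fractional-linear substitution as in Theorem \ref{twodim}, identify $J(C_f)_2$ with the permutation module $\F_2^{B}$ of $\A_{2g}$, and use Lemmas \ref{An}, \ref{AA8}, \ref{Ksimple} together with the ``very simple module'' machinery), with no reduction-theoretic input at all; note also that the paper's logical order is the reverse of yours, since Theorem \ref{tateToricOne} takes $\End(X)=\Z$ as a \emph{hypothesis} when proving openness of the $\ell$-adic image.

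Two further steps fail as stated. First, in the case $\Gal(u)=\A_{2g}$ the image of Galois in $\Aut(J(C_f)_2)$ is just $\A_{2g}$ acting on $W/\F_2\mathbf{1}$, which is a proper subgroup of $\Sp_{2g}(\F_2)$ of comparatively tiny order; no appeal to the Zalesskii--Serezhkin/Kantor classification can upgrade it to the full symplectic group, so mod-$2$ surjectivity is simply false in that case (and indeed the correct argument only needs the irreducibility and endomorphism properties of this module, Lemma \ref{AA8}, not surjectivity). Second, the assertion that ``mod-$\ell$ surjectivity for odd $\ell$ follows by a parallel analysis of the mod-$\ell$ permutation module coming from $\RR_f$'' has no content: the identification of torsion points with (classes of) subsets of branch points is special to $\ell=2$, and $J(C_f)_\ell$ for odd $\ell$ admits no such permutation-module description. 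If you want to pursue the monodromy-first route, you must either add the arithmetic hypotheses of Theorem \ref{mainENDO} (in which case you are reproving that theorem, not Theorem \ref{PLMSendo}) or replace Steps 1--2 by the Lie-algebra arguments of Section 4, which still require $\End(X)=\Z$ as input.
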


The following statement follows easily from  \cite[Th. 8.3 on p. 49]{ZarhinPLMS2} applied to $t=t_1$ and $h(x)=(x-t_2)u(x)$ and an elementary substitution described in  \cite[Proof of Th. 5.4 on p. 38]{ZarhinPLMS2}.

 \begin{thm}
\label{PLMSimage} Suppose that $\K$ is a field that is finitely generated over $\Q$ and $n=2g+2\ge 12$ is even. Assume also that
$f(x)=(x-t_1)(x-t_2) u(x)$ with
$$t_1, t_2 \in K, \ t_1 \ne t_2, \ u(x) \in K[x], \ \deg(u)=n-2=2g$$
and $\Gal(u)=\ST_{2g}$ or $\A_{2g}$. Let $\lambda$ be the canonical principal polarization on the jacobian $J(C_f)$.
 Then the group $G_{\ell,J(C_f),K}$ is an open subgroup of finite index in the group $\Gp(T_{\ell}(J(C_f)), e_{\lambda,\ell})$ of symplectic similitudes.
\end{thm}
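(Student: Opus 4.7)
The plan is to deduce the theorem as a direct consequence of \cite[Th.~8.3]{ZarhinPLMS2} applied to an appropriate refactoring of $f$. I set $t := t_1$ and $h(x) := (x-t_2) u(x) \in K[x]$, so that $f(x) = (x - t) \cdot h(x)$ with $\deg(h) = 2g+1$. Because $t_2 \in K$, the Galois group of $h$ over $K$, viewed as a permutation group on the roots of $h$, coincides with $\Gal(u)$: it permutes the $2g$ roots of $u$ as $\ST_{2g}$ or $\A_{2g}$ and fixes the rational root $t_2$. Together with the hypotheses that $K$ is finitely generated over $\Q$ and $g \ge 5$ (since $n = 2g+2 \ge 12$), this is exactly the input required by Theorem~8.3, which then yields openness of the $\ell$-adic Galois image in the corresponding symplectic similitude group.

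The only gap between the decomposition $f = (x - t_1) h$ and the precise hypothesis of Theorem~8.3 concerns the even versus odd degree of the defining polynomial of the hyperelliptic curve. This is bridged by the elementary substitution used in the proof of \cite[Th.~5.4]{ZarhinPLMS2}: a $K$-rational M\"obius change of variables that sends the Weierstrass point above $t_1$ to infinity and produces a $K$-isomorphism between $C_f$ and a hyperelliptic curve $y'^2 = f^*(x')$ of odd degree $2g+1$. Because this isomorphism is defined over $K$, it induces a $\Gal(K)$-equivariant isomorphism of jacobians that preserves the canonical principal polarization, hence an isomorphism of the associated symplectic Tate modules. Under this isomorphism, the factored form $f(x) = (x-t_1) h(x)$ is carried to an analogous factored form of $f^*$ that fits directly into Theorem~8.3.

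Transporting the conclusion back through the substitution yields that $G_{\ell, J(C_f), K}$ is an open subgroup of finite index in $\Gp(T_\ell(J(C_f)), e_{\lambda, \ell})$, as required. The step I expect to demand the most care is the bookkeeping of the substitution: verifying that after sending $t_1$ to infinity, the resulting odd-degree polynomial $f^*$ retains the factored shape demanded by Theorem~8.3 with a ``large Galois group'' factor isomorphic to $\ST_{2g}$ or $\A_{2g}$, and that the induced isomorphism on Tate modules is compatible with the Riemann form $e_{\lambda, \ell}$ and the cyclotomic character.
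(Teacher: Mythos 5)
Your proposal matches the paper's own argument: the paper justifies Theorem \ref{PLMSimage} precisely by invoking \cite[Th.~8.3 on p.~49]{ZarhinPLMS2} applied to $t=t_1$ and $h(x)=(x-t_2)u(x)$, combined with the elementary substitution from the proof of \cite[Th.~5.4]{ZarhinPLMS2} (the same change of variables $x_1=1/(x-t_1)$, $y_1=y/(x-t_1)^{g+1}$ that reappears in the proof of Theorem \ref{twodim} here). Your extra bookkeeping of how the factored shape and the polarized Tate module transport through that substitution is exactly what the cited substitution provides, so the approach is the same.
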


The aim of this note is, by imposing certain additional  {\sl arithmetic} conditions (inspired by \cite{HallBLMS}) on $f(x)$, to obtain the results about the groups
$\tilde{G}_{\ell,J(C_f),K}$ for almost all $\ell$ when $K$ is  a finitely generated field. In a sense, our approach is a combination of methods of \cite{ZarhinPLMS2} and \cite{HallBLMS}.
As a bonus, we were able to decrease lower bound for $g$ and cover the case  when $K$ has  prime characteristic.
Our main result is the following statement.

\begin{thm}
\label{mainENDO}
Let $g \ge 3$ be an integer. Let $K$ be a discrete valuation field, let $R\subset K$ be the discrete valuation ring  with maximal ideal $\m$ and residue field $k=R/\m$ of characteristic different from $2$. (In particular, $\fchar(K)\ne 2$.)  Let
$$u(x)=\sum_{i=0}^{2g} a_i x^i \in  K[x]$$
be a degree $2g$ polynomial that enjoys the following properties.

\begin{itemize}
\item[(i)]
The polynomial $u(x)$ is irreducible over $K$ and has no multiple roots, and its Galois group $\Gal(u)$ is either $\ST_{2g}$ or $\A_{2g}$.
\item[(ii)]
All the coefficients $a_i$ lie in $R$, i.e., $u(x)\in R[x]$.
\item[(iii)]
Neither the leading coefficient $a_{2g}$ nor the discriminant of $u(x)$ lie in $\m$. In other words $u(x)$ modulo $\m$ is a degree $2g$ polynomial over $k$ without multiple roots.
\end{itemize}

Suppose that $t_1$ and $t_2$ are two distinct elements of $R$ such that
$$t_1-t_2 \in \m, \ u(t_1) \not\in \m, \ u(t_2) \not\in \m .$$
Then

  $\End(J(C_f))=\Z$ where $f(x)=(x-t_1)(x-t_2) u(x)$. In particular, $J(C_f)$ is an absolutely simple abelian variety.

If, in addition, $K$ is a field  that is finitely generated over its prime subfield then:

\begin{itemize}
\item[(i)]
For all primes $\ell \ne \fchar(K)$  the group $G_{\ell,J(C_f),K}$ is an open subgroup of finite index in the group $\Gp(T_{\ell}(J(C_f)), e_{\lambda,\ell})$.
\item[(ii)]
If $L/K$ is a finite algebraic field extension then
for all but finitely many primes $\ell$ the group
$\tilde{G}_{\ell, ,J(C_f),L}$ contains
$\Sp(J(C_f)_{\ell}, \bar{e}_{\lambda,\ell})$ and  the group $G_{\ell,J(C_f),L}$ contains $\Sp(T_{\ell}(J(C_f)), e_{\lambda,\ell})$.
If, in addition, $\fchar(K)=0$ then for all but finitely many primes $\ell$
$$\tilde{G}_{\ell,J(C_f),L}=
\Gp(J(C_f)_{\ell}, \bar{e}_{\lambda,\ell}), \ G_{\ell,J(C_f),L}=\Gp(T_{\ell}(J(C_f)), e_{\lambda,\ell}).$$
\end{itemize}
\end{thm}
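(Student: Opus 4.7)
The plan is to combine the semistable degeneration coming from the arithmetic hypotheses on $t_1,t_2$ with the irreducibility forced by $\Gal(u)$, and then to invoke Hall's transvection criterion~\cite{HallBLMS}. Compared with Theorems~\ref{PLMSendo} and~\ref{PLMSimage}, the novelty is that the place $\m$ manufactures a symplectic transvection in the Galois image geometrically, which is what permits the weakening of the hypothesis from $g\ge 5$ to $g\ge 3$ and the extension to positive characteristic. Concretely, the conditions $t_1-t_2\in\m$ and $u(t_1), u(t_2)\notin\m$ together with (ii), (iii) imply that $f(x)\bmod\m$ equals $(x-\bar t_1)^{2}\bar u(x)$ with $\bar u\in k[x]$ separable of degree $2g$ and $\bar u(\bar t_1)\ne 0$. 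Hence $C_f$ has semistable reduction over $R$ whose special fibre has exactly one ordinary double point, and the Picard--Lefschetz formula shows that the inertia subgroup $I\subset\Gal(K)$ acts on $T_\ell(J(C_f))$, for every $\ell\ne\fchar(k)$, through a single symplectic transvection along the class of the vanishing cycle. Reducing mod $\ell$ yields a transvection in $\tilde G_{\ell,J(C_f),K}$, which passes to $\tilde G_{\ell,J(C_f),L}$ upon choosing a place of $L$ above $\m$.

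The hypothesis $\Gal(u)\in\{\ST_{2g},\A_{2g}\}$ makes $\Gal(K)$ act $2$-transitively on the $2g$ roots of $u$. Standard Galois-module arguments for hyperelliptic jacobians then yield absolute irreducibility of $J(C_f)_\ell$ as a $\Gal(K)$-module for every $\ell$ exceeding a bound depending only on $g$. Combined with the transvection from the previous step, Hall's criterion~\cite{HallBLMS} gives
$$
\Sp(J(C_f)_\ell,\bar e_{\lambda,\ell})\;\subset\;\tilde G_{\ell,J(C_f),L}
$$
for all but finitely many primes $\ell$; the $\ell$-adic version $\Sp(T_\ell(J(C_f)),e_{\lambda,\ell})\subset G_{\ell,J(C_f),L}$ follows by lifting, since for $\ell\ge 5$ any closed subgroup of $\Sp(\Z_\ell)$ surjecting onto $\Sp(\F_\ell)$ is all of $\Sp(\Z_\ell)$. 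In characteristic zero, the image of the cyclotomic character is open in $\Z_\ell^{*}$, upgrading $\Sp$ to $\Gp$ for almost all $\ell$.

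The claim $\End(J(C_f))=\Z$ does not require the ``finitely generated'' hypothesis. For any one sufficiently large $\ell\ne\fchar(K)$ to which the two preceding steps apply, the containment $\tilde G_{\ell,J(C_f),K}\supset\Sp(J(C_f)_\ell,\bar e_{\lambda,\ell})$ makes the Galois action absolutely irreducible with commutant $\F_\ell$; combined with the elementary injection $\End^{0}(J(C_f))\otimes\Q_\ell\hookrightarrow\End_{\Gal(K)}(T_\ell(J(C_f))\otimes\Q_\ell)$, this forces $\End^{0}(J(C_f))=\Q$, hence $\End(J(C_f))=\Z$ and absolute simplicity. For part~(i) of the finitely-generated statement, openness of $G_{\ell,J(C_f),K}$ in $\Gp$ at the remaining small primes follows from a standard compatibility principle: once $\End(J(C_f))=\Z$, openness at any single prime propagates to every prime by Faltings' theorem in characteristic zero (and its analogues in positive characteristic due to Mori, Faltings and the author).

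The step I expect to be hardest is obtaining a uniform-in-$\ell$ bound for the exceptional set appearing in the second paragraph. One must verify, for every sufficiently large $\ell$, both that the inertia monodromy is a \emph{primitive} transvection over $\Z_\ell$ (so that its mod-$\ell$ reduction is nonzero) and that absolute irreducibility of $J(C_f)_\ell$ survives reduction modulo $\ell$. The first is a consequence of the Picard--Lefschetz formula applied to a single node, but the second requires careful input from the modular representation theory of $\ST_{2g}$ and $\A_{2g}$ at primes dividing $|\ST_{2g}|$, where the relevant permutation module can decompose unexpectedly.
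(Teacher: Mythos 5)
Your geometric setup is right and matches the paper: the congruences on $t_1,t_2$ together with hypotheses (ii), (iii) do produce a semistable model whose special fibre has a single ordinary double point, hence a N\'eron model with toric dimension one and an inertia element acting through a symplectic transvection. The lifting from $\Sp$ over $\F_\ell$ to $\Sp$ over $\Z_\ell$ via Serre's theorem and the cyclotomic upgrade from $\Sp$ to $\Gp$ in characteristic zero also match the paper's Corollary \ref{corToricOne}. But the logical core of your argument is circular, and the step you yourself flag as hardest is the one that fails. Hall's criterion (and the Zalesskii--Serezhkin classification behind it) takes as \emph{input} the irreducibility of the mod-$\ell$ Galois module $J(C_f)_\ell$ for almost all $\ell$, and the standard route to that irreducibility is $\End(J(C_f))=\Z$ together with the Tate/semisimplicity theorems over finitely generated fields. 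You propose instead to deduce irreducibility of $J(C_f)_\ell$ for all large $\ell$ from the $2$-transitivity of $\Gal(u)$ on the roots of $u$. No such implication exists: the permutation action on $\RR_u$ controls only the $2$-torsion $J(C_f)_2$ (Section \ref{hyper2}) and says nothing about the $\Gal(K)$-module $J(C_f)_\ell$ for odd $\ell$. Consequently your third paragraph, which derives $\End(J(C_f))=\Z$ from $\tilde{G}_{\ell,J(C_f),K}\supset\Sp(J(C_f)_\ell,\bar{e}_{\lambda,\ell})$ at one large $\ell$, rests on a conclusion you have not established --- and could not establish this way, since the theorem asserts $\End(J(C_f))=\Z$ over an arbitrary discrete valuation field, where no big-monodromy statement is even available.

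The missing idea is the paper's two-step endomorphism argument, which runs entirely at the prime $2$ and at the special fibre and needs no irreducibility of $\ell$-torsion. First, a fractional-linear substitution reduces $f$ of even degree $2g+2$ to odd degree $2g+1$ (Theorem \ref{twodim}), after which $J(C_f)_2$ is identified with the permutation module $\F_2^{B}$ of $\Alt(B)\cong\A_{2g}$ (Lemma \ref{order2}); the structure of this module (Lemma \ref{AA8}) plus a minimal-cover argument on $\tilde{G}_{4,X,F}$ (Theorem \ref{mainAV}) forces the center of $\End^0(J(C_f))$ to be a field, i.e.\ $\End^0(J(C_f))$ is a simple $\Q$-algebra. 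Second, the toric-dimension-one reduction yields a ring homomorphism from $\End(J(C_f))$ to the endomorphism ring $\Z$ of the one-dimensional torus in the closed fibre, sending $1$ to $1$ (Theorem \ref{toricONE}); for a simple $\Q$-algebra this forces $\End^0(J(C_f))=\Q$, hence $\End(J(C_f))=\Z$. Only then does the paper feed $\End=\Z$ into the Lie-algebra argument with the rank-one inertia operator (Theorem \ref{tateToricOne}) for part (i), and into the Hall and Arias-de-Reyna--Gajda--Petersen results for part (ii). Finally, your claim that openness at the residue-characteristic prime propagates by ``Faltings' theorem'' is not the mechanism used: the paper handles $\ell=\fchar(k)$ by comparing the semisimple parts of the $\ell$-adic Lie algebras across different primes (Remark \ref{TateHodge}(i)).
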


\begin{rem}
Suppose that $u(0)=a_0 \not\in \m$ (e.g., $a_0=\pm 1$). Then any pair $\{t_1, t_2\}$ of distinct elements of $\m$ satisfies the conditions of Theorem \ref{mainENDO} (for given $u(x)$).
\end{rem}

\begin{ex}
\label{mainEX}
Let $\Oc$ be a Dedekind ring with infinitely many maximal ideals and $K$ its field of fractions with $\fchar(K) \ne 2$. (E.g., $K$ is a number field  with ring of integers $\Oc$. Another
example: $\Oc$ is the ring of regular functions on an absolutely irreducible affine curve $\mathcal{C}$ over a field of characteristic different from $2$ and $K$ is the field of rational functions on $\mathcal{C}$.)
Let  $g>1$ be an integer,  and $u(x)=\sum_{i=0}^{2g} a_i x^i \in  \Oc[x]$  a degree $2g$ polynomial that is irreducible over $K$.  Pick any maximal ideal $\P$ in $\Oc$  such that the characteristic of the residue field  $\Oc/\P$ is different from
 $2$ and such that $a_0, a_n$ and the discriminant of $f(x)$ are $\P$-adic units. (This rules out only finitely many maximal ideals in $\Oc$.)
 Let us consider the discrete valuation ring $R$ that is the localization $\Oc_{\P}$ of $\Oc$ at $\P$. Then the residue field $k$ of $R$ coincides with $\Oc/\P$ and therefore has odd characteristic. Clearly,
$a_0, a_n$ and the discriminant of $f(x)$ are units in $R$.  Let $t_1, t_2$ be distinct elements of $\P$. Then they both lie in the maximal ideal of $R$.
 Now it's clear that if $g\ge 3$ then $\{K, R, u(x), t_1,t_2\}$ satisfy the conditions of Theorem \ref{mainENDO}.

For example, let $K=\Q, \Oc=\Z$ and $u(x)=x^{2g}-x-1$.
It is known \cite[Remark 2 at the bottom of p. 43]{SerreG} that $u(x)$ is irreducible over $\Q$ and its Galois group is $\ST_{2g}$.
In order to figure out for which prime $p$ the polynomial $u(x) \bmod p$ acquires multiple roots, we follow Serre's arguments (ibid).
 So, let us consider the polynomial $\bar{u}(x)=x^{2g}-x-1 \in \F_p[x]$ and assume that it has a multiple root say, $\alpha$. T
hen $\alpha$ is a also a root of the derivative $\bar{u}^{\prime}(x)=2g x^{2g-1}-1 \in \F_p[x]$.
It follows that $p$ does {\sl not} divide $2g$ and $\alpha \ne 0$.
  Clearly, $\alpha$ is a root of $2g \bar{u}(x)- x\bar{u}^{\prime}(x)=(1-2g)x-2g$.
This implies that $p$ does {\sl not} divide $2g-1$ and $\alpha =2g/(1-2g) \in \F_p$. This implies that $(2g)^{2g}/(1-2g)]^{2g-1}-1=0$ in $\F_p$,
  i.e.,  the integer $N(g)=(2g)^{2g}-(1-2g)^{2g-1}$ is divisible by $p$. In other words, the prime divisors of the discriminant of $u(x)$ are exactly the prime divisors of $N(g)$.  (Clearly, any prime divisor of $2g(2g-1)$ does not divide $N(g)$.) Now we  take any odd prime $p$ that does not divide $N(g)$ and pick any pair of {\sl distinct} integers $s_1, s_2$, and put $t_1=ps_1, t_2=ps_2$. Then $\{\Q, \Z_{(p)}, x^{2g}-x-1, t_1,t_2\}$  satisfy the conditions of Theorem \ref{mainENDO}. This implies that if we put $f(x)=(x^{2g}-x-1)(x-t_1)(x-t_2)$ then the jacobian $X=J(C_f)$ of the hyperelliptic curves $C_f:y^2=f(x)$  is an absolutely simple $g$-dimensional abelian variety over $K=\Q$ that enjoys the following properties.

$\End(X)=\Z$; for all primes $\ell$ the group $G_{\ell,X,K}$ is an open subgroup of finite index in $\Gp(T_{\ell}(X),e_{\lambda, \ell})$. In addition, if $L$ is a number field then for all but finitely many primes $\ell$
$$G_{\ell,X,L}=\Gp(T_{\ell}(X),e_{\lambda, \ell}), \  \tilde{G}_{\ell,X,L}=\Gp(X_{\ell},\bar{e}_{\lambda, \ell}).$$
\end{ex}

\begin{rem}
Earlier Chris Hall \cite{HallBLMS} proved  an analogue of Theorem \ref{mainENDO}:
in his result  $f(x)$ is required to be an  irreducible polynomial of degree $n\ge 5$  over a number field $K$ with coefficients
in the ring of integers of $K$ and Galois group $\ST_n$, and  such that modulo some odd prime it acquires exactly one multiple root and its multiplicity is $2$.
(His proof makes use of results of \cite{ZarhinMRL}.) It was proven by Emmanuel Kowalski (in an appendix to  \cite{HallBLMS}) that
 most of polynomials enjoy this property. It would be interesting to produce  explicit examples of such $f(x)$.
(E.g.,  arguments of \cite[p. 42, Remark 2]{SerreG} imply that $f(x)=x^n-x-1$ enjoys this property.)
However, Example \ref{mainEX} tells us how
 to produce a plenty of explicit examples of $f(x)$ that satisfy the conditions of Theorem \ref{mainENDO}.
\end{rem}

The next result tells us that distinct (unordered) pairs $(t_1,t_2)$ with given $u(x)$ (as in Theorem \ref{mainENDO}) lead to non-isomorphic (over $\bar{K}$) jacobians $J(C_f)$.

\begin{thm}
\label{distinctJAC}
Let $g \ge 2$ be a positive integer, $K$  a field of characteristic different from $2$, $u(x) \in K[x]$ an irreducible polynomial of degree $2g$ and without multiple roots. Assume that $\Gal(u)=\ST_{2g}$ or $\A_{2g}$. Let $r$ be an even positive integer, and let $B_1$ and $B_2$ be two distinct $r$-element subsets of $K$. Let us put
$$f_1(x)=u(x)\prod_{\alpha\in B_1}(x-\alpha) \in K[x], \ f_2(x)=u(x)\prod_{\alpha\in B_2}(x-\alpha) \in K[x].$$
Suppose that
$$\End(J(C_{f_1}))=\Z, \ \End(J(C_{f_2}))=\Z.$$
Then the jacobians $J(C_{f_1})$ and $J(C_{f_2})$ are not isomorphic over $\bar{K}$.
\end{thm}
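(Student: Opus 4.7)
My plan is to argue by contradiction: suppose $J(C_{f_1})$ and $J(C_{f_2})$ are isomorphic over $\bar K$, and deduce $B_1=B_2$. The idea is that, under the endomorphism hypothesis, this isomorphism is rigid enough to produce a $K$-rational automorphism of $\PP^1$ that carries one branch locus to the other, and then the large Galois action on $\RR_u$ forces it to be the identity.

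First I would extract from $\End(J(C_{f_i}))=\Z$ two structural facts: (i) each $J(C_{f_i})$ has Picard number $1$, so its canonical principal polarization is the unique principal polarization; and (ii) $\Aut(C_{f_i})=\{\I,\iota\}$ with $\iota$ the hyperelliptic involution, because $\Aut(C_{f_i})$ embeds into $\Aut(J(C_{f_i}),\lambda)\subset \End(J(C_{f_i}))^{\times}=\{\pm 1\}$ and $\iota$ already accounts for $-1$. By (i), any $\bar K$-isomorphism $J(C_{f_1})\cong J(C_{f_2})$ automatically respects the principal polarizations (after composing with $[-1]$ if necessary), so by the strong Torelli theorem applied in genus $g':=g+r/2-1\ge 2$ it is induced by some isomorphism $C_{f_1}\cong C_{f_2}$ over $\bar K$. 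Passing to the canonical hyperelliptic quotients $C_{f_i}/\iota\cong\PP^1$, this yields $\phi\in\PGL_2(\bar K)$ with $\phi(\RR_u\cup B_1)=\RR_u\cup B_2$.

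Next I would show that $\phi$ is actually defined over $K$. For every $\sigma\in\Gal(K)$, the Galois twist $\sigma(\phi)$ is another $\bar K$-isomorphism between the $K$-curves $C_{f_1}$ and $C_{f_2}$, so $\phi^{-1}\circ\sigma(\phi)\in\Aut(C_{f_1})=\{\I,\iota\}$ by (ii). Since both $\I$ and $\iota$ act trivially on the quotient $C_{f_1}/\iota\cong\PP^1$, this forces $\sigma(\phi)=\phi$ in $\PGL_2(\bar K)$; hence $\phi\in\PGL_2(K)$. Being Galois-equivariant, $\phi$ now sends Galois orbits in $\RR_u\cup B_1$ to Galois orbits in $\RR_u\cup B_2$; since $u$ is irreducible of degree $2g$, those orbits are precisely $\RR_u$ (a single orbit of size $2g$) together with the $r$ singletons in $B_i$, and comparison of orbit sizes forces $\phi(\RR_u)=\RR_u$ and $\phi(B_1)=B_2$.

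Finally, the permutation of $\RR_u$ induced by this $K$-rational $\phi$ commutes with $\Gal(u)$. Since $\Gal(u)\supseteq\A_{2g}$ and $2g\ge 4$, the centralizer of $\Gal(u)$ in $\mathrm{Sym}(\RR_u)$ is trivial (a standard fact for $\ST_n$ and $\A_n$ with $n\ge 4$), so $\phi$ fixes every root of $u$; having at least three fixed points in $\PP^1$, $\phi=\I$ in $\PGL_2(K)$, and therefore $B_1=\phi(B_1)=B_2$, contradicting the hypothesis $B_1\ne B_2$. The step I expect to be most delicate is the descent argument that produces $\phi\in\PGL_2(K)$: it rests on the observation that the projection $\Aut(C_{f_1})\to\PGL_2$ is trivial because the hyperelliptic involution is invisible on the $\PP^1$-quotient, which is precisely where the hypothesis $\End(J(C_{f_i}))=\Z$ enters crucially via the identification $\Aut(C_{f_i})=\{\I,\iota\}$.
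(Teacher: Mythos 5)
Your proposal is correct and follows essentially the same route as the paper: uniqueness of the principal polarization plus Torelli to produce $T\in\PGL_2(\bar{K})$ carrying $\RR_u\cup B_1$ to $\RR_u\cup B_2$, then Galois-orbit bookkeeping and the triviality of the centralizer of $\A_{2g}$ to force $T=\I$ and $B_1=B_2$. The only difference is organizational: where you prove rationality of $T$ up front via the coboundary $T^{-1}\sigma(T)\in\Aut(C_{f_1})=\{\I,\iota\}$ killing the $\PGL_2$-image, the paper splits into cases and, when $T\notin\PGL_2(K)$, turns the same element $T^{-1}\sigma(T)$ into an extra automorphism of $J(C_{f_1})$ contradicting $\Aut(J(C_{f_1}))=\{\pm 1\}$ --- the same underlying mechanism.
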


The paper is organized as follows.
In Section \ref{group} we discuss the standard $(2g)$-dimensional permutational representation of the alternating group $\A_{2g}$ in characteristic $2$.
Section \ref{AV} deals with $g$-dimensional abelian varieties $X$ such that
 the absolute Galois group of the ground field acts on $X_2$ through its quotient isomorphic to $\A_{2g}$ and the $\A_{2g}$-module $X_2$ is isomorphic to the permutational one.
Examples of such $X$ are provided by certain hyperelliptic jacobians that are discussed in Section \ref{hyper2}; among them are jacobians that satisfy the conditions of Theorem \ref{mainENDO}.
 We prove Theorem \ref{mainENDO} in Section \ref{monodromy}.  In Section \ref{cycloCHAR0} we prove auxiliary results about Galois groups of cyclotomic extensions. In Section \ref{fractional} we prove Theorem \ref{distinctJAC}.
 Section \ref{conclude} contains (more or less straightforward) corollaries that tell us that the hyperelliptic jacobians involved (and their self-products) satisfy the Tate, Hodge and Mumford-Tate conjectures.

 {\bf Acknowledgements} I am grateful to Chris Hall for useful discussions, Gregorz Banaszak and Wojciech Gajda for stimulating questions, and Boris Kunyavskii for help with references. My special thanks go to the referee, whose comments helped to improve the exposition.

 This work was started during my stay at Max-Planck-Institut f\"ur Mathematik in Serptember of 2013 and finished
 during the academic year 2013/2014 when I was Erna and Jakob Michael Visiting Professor in the Department of Mathematics  at the Weizmann Institute of Science: the hospitality and support of both Institutes are gratefully acknowledged.

\section{Permutational representations of alternating groups}
\label{group}

\begin{sect}
\label{cover}
 Recall \cite{FT} that a surjective homomorphism of finite groups
$\pi:\GG_1\twoheadrightarrow \GG$ is called a {\sl minimal cover}
if no proper subgroup of $\GG_1$ maps onto $\GG$.
In particular, if $\GG$ is perfect and $\GG_1\twoheadrightarrow \GG$ is a
minimal cover then $\GG_1$ is also perfect.
In addition, if $r$ is a positive integer such that every subgroup in $\GG$ of index dividing $r$
coincides with $\GG$ then the same is true for $\GG_1$ \cite[Remark 3.4]{ZarhinMZ}. Namely,
every subgroup in $\GG_1$ of index dividing $r$ coincides with $\GG_1$.
\end{sect}

\begin{lem}
\label{An} Let $m\ge 5$ be an integer, $\A_m$ the corresponding alternating
group and $\GG_1 \twoheadrightarrow \A_m$
 a minimal cover.

 Then
the only subgroup of index $<m$ in $\GG_1$ is $\GG_1$ itself.
\end{lem}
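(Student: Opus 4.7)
The plan is to reduce the lemma to the corresponding statement for $\A_m$ itself, and then transfer the result up along the minimal cover $\GG_1 \twoheadrightarrow \A_m$ using the principle recalled in \ref{cover} (i.e., Remark 3.4 of \cite{ZarhinMZ}).

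First I would establish that for $m \ge 5$ the only subgroup of $\A_m$ of index strictly less than $m$ is $\A_m$ itself. This is the classical fact that $m$ is the minimal degree of a faithful transitive (indeed, of any nontrivial) permutation action of $\A_m$: if $H \subsetneq \A_m$ has index $d$, then the action of $\A_m$ on the $d$ cosets of $H$ yields a homomorphism $\A_m \to \ST_d$ whose kernel, being a proper normal subgroup of the simple group $\A_m$, must be trivial. Hence $\A_m$ embeds into $\ST_d$, which together with the simplicity of $\A_m$ for $m \ge 5$ forces $d \ge m$ (for instance because any transitive subgroup of $\ST_d$ isomorphic to $\A_m$ would, for $d<m$, contradict the well-known classification of index $<m$ subgroups of $\A_m$, all of which are conjugate to the point-stabilizer $\A_{m-1}$).

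Next, for every integer $r$ in the range $1 \le r \le m-1$, every subgroup of $\A_m$ of index dividing $r$ has index at most $r < m$ and is therefore equal to $\A_m$ by the previous paragraph. Applying the transfer principle from \ref{cover} with this $r$, I conclude that every subgroup of $\GG_1$ of index dividing $r$ equals $\GG_1$. Running $r$ over $1,2,\dots,m-1$ covers all possible indices strictly less than $m$, and the lemma follows.

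The only substantive step is the classical bound on the minimal index of a proper subgroup of $\A_m$, and even that reduces to simplicity of $\A_m$ together with the coset-action argument. Once it is in place, the passage to $\GG_1$ is automatic via the cited remark, so I do not anticipate any real obstacle beyond properly quoting these two inputs.
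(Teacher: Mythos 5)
Your proposal is correct and follows exactly the route the paper intends: the paper's proof is just a citation to Lemma 2.2(i) of \cite{ZarhinPLMS2}, but the two ingredients you use --- the minimal-index fact for $\A_m$ and the transfer principle for minimal covers recalled in \ref{cover} --- are precisely the ones the paper sets up for this purpose. One small repair: your parenthetical justification that an embedding $\A_m\hookrightarrow \ST_d$ forces $d\ge m$ appeals to the classification of small-index subgroups of $\A_m$, which is circular here; the clean reason is simply that $\#(\A_m)=m!/2>d!=\#(\ST_d)$ whenever $d\le m-1$ and $m\ge 3$.
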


\begin{proof}
This is Lemma 2.2(i) of \cite{ZarhinPLMS2}.

\end{proof}

\begin{sect}
Let $g\ge 3$ be an integer. Then $2g \ge 6$ and $\A_{2g}$ is a simple
nonabelian group.

Let $B$ be a $2g$-element set. We write $\Perm(B)$ for the group of all
permutations of $B$. The choice of ordering on $B$ establishes an isomorphism
between $\Perm(B)$ and the symmetric group $\mathrm{S}_{2g}$. We write
$\Alt(B)$ for the only subgroup of index $2$ in $\Perm(B)$. Every
isomorphism $\Perm(B)\cong\mathrm{S}_{2g}$ induces an isomorphism between
$\Alt(B)$ and the alternating group $\A_{2g}$. Let us consider the
$2g$-dimensional $\F_2$-vector space $\F_2^B$ of all $\F_2$-valued functions on
$B$ provided with the natural structure of faithful $\Perm(B)$-module. Notice
that the standard symmetric bilinear form
$$\F_2^B \times \F_2^B\to \F_2, \ (\phi,\psi)\mapsto \sum_{b\in
B}\phi(b)\psi(b)$$ is non-degenerate and $\Perm(B)$-invariant.

Since $\Alt(B)\subset \Perm(B)$, one may view $\F_2^B$ as a faithful
$\Alt(B)$-module.
\end{sect}

\begin{lem}
\label{AA8}
\begin{itemize}
\item[(i)] The centralizer $\End_{\Alt(B)}(\F_2^B)$ has $\F_2$-dimension $2$.

\item[(ii)] Every proper non-zero $\Alt(B)$-invariant subspace in $\F_2^B$ has
dimension $1$ or $2g-1$. In particular, $\F_2^B$ does not contain a proper
non-zero $\Alt(B)$-invariant even-dimensional subspace.
\end{itemize}
\end{lem}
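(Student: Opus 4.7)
My plan is to identify the two obvious $\Alt(B)$-submodules --- the line $\langle\mathbf{1}\rangle$ spanned by the constant function $\mathbf{1}$, and the codimension-one ``sum-zero'' submodule $V_0=\{\phi:\sum_{b\in B}\phi(b)=0\}$ --- and observe that $\mathbf{1}\in V_0$ because $|B|=2g$ is even, giving a chain
$$0 \subset \langle\mathbf{1}\rangle \subset V_0 \subset \F_2^B$$
of dimensions $0,1,2g-1,2g$. I will first prove (ii) by showing this is the complete lattice of $\Alt(B)$-submodules, and then deduce (i) via a short orbit-counting computation on $B\times B$.

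For (ii), my strategy is orbit-theoretic. Given a nonzero proper submodule $U$, pick any $v\in U\setminus\{0\}$ and write $v=e_S:=\sum_{b\in S}e_b$ for a nonempty $S\subseteq B$ of size $k$. The key combinatorial input is that for every $k\in\{1,\dots,2g-1\}$ the group $\Alt(B)$ still acts transitively on $k$-subsets of $B$, since the $\Perm(B)$-stabilizer $\mathbf{S}_k\times\mathbf{S}_{2g-k}$ always contains an odd permutation when $g\ge 3$. Hence $U$ contains $e_{S'}$ for every $S'$ with $|S'|=k$, and choosing $S,S'$ that differ in a single element produces every $e_a+e_b\in U$; these generate $V_0$. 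A case-split by the size $k$ then finishes: if $k$ is odd, $e_S\notin V_0$ and $U\supseteq V_0+\langle e_S\rangle=\F_2^B$, contradicting properness; if $k$ is even with $2\le k\le 2g-2$, then $V_0\subseteq U\subsetneq\F_2^B$ forces $U=V_0$; and $k=2g$ means $v=\mathbf{1}$. Therefore either some vector of $U$ triggers the middle case and $U=V_0$, or every nonzero element of $U$ equals $\mathbf{1}$ and $U=\langle\mathbf{1}\rangle$.

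For (i), I will use self-duality of the permutation module under the standard nondegenerate symmetric form to rewrite
$$\End_{\Alt(B)}(\F_2^B) \cong (\F_2^B \otimes \F_2^B)^{\Alt(B)} \cong \F_2^{(B\times B)/\Alt(B)},$$
the space of $\F_2$-valued functions on $B\times B$ constant on $\Alt(B)$-orbits. Since $\Alt(B)\cong\A_{2g}$ is $2$-transitive on $B$ for $2g\ge 4$, the diagonal action on $B\times B$ has exactly two orbits --- the diagonal and its complement --- so the invariant space has dimension $2$.

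I do not anticipate a serious obstacle: the argument reduces to transitivity and $2$-transitivity properties of $\A_{2g}$, which are standard for $g\ge 3$, combined with the elementary observation that weight-$k$ indicator vectors differing in one coordinate yield $e_a+e_b$. The point that most deserves care is verifying transitivity of $\Alt(B)$ on $k$-subsets for every $1\le k\le 2g-1$, including the a priori delicate middle case $k=g$, where an odd permutation in $\mathbf{S}_g\times\mathbf{S}_g$ is supplied by any single transposition inside one of the two halves.
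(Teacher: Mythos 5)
Your proof is correct, and it is worth noting that the paper itself does not argue the lemma at all: it simply cites Lemma 2.5 of \cite{ZarhinPLMS2}, remarking that part (i) follows from double transitivity via \cite[Lemma 7.1]{Passman}. What you have written is a complete, self-contained replacement for those references. For part (i) your mechanism is in substance the same as the cited one: the centralizer of a permutation module is spanned by the orbit matrices of the action on $B\times B$, so its dimension equals the permutation rank, which is $2$ for the doubly transitive group $\Alt(B)$; your formulation via self-duality and $(\F_2^{B\times B})^{\Alt(B)}$ is just a clean way of saying this, and the two invariant endomorphisms are visibly the identity and $\phi\mapsto\bigl(\sum_b\phi(b)\bigr)\mathbf{1}$. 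For part (ii) your orbit argument correctly pins down the full submodule lattice $0\subset\langle\mathbf{1}\rangle\subset V_0\subset\F_2^B$ (using that $|B|=2g$ is even so $\mathbf{1}\in V_0$), which is stronger than the stated conclusion about dimensions. The one step that deserves the care you gave it --- transitivity of $\Alt(B)$ on $k$-subsets for all $1\le k\le 2g-1$ --- is handled correctly: the setwise stabilizer $\ST_k\times\ST_{2g-k}$ in $\Perm(B)$ always contains a transposition since $\max(k,2g-k)\ge 2$, and the subsequent reduction (odd weight forces $U=\F_2^B$, intermediate even weight forces $U=V_0$, otherwise $U=\langle\mathbf{1}\rangle$) is airtight. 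The only thing your write-up buys beyond the paper is independence from the literature; the only thing the citation buys is brevity.
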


\begin{proof} This is Lemma 2.5 of \cite{ZarhinPLMS2}
(Since $\Alt(B)$ is doubly transitive, (i) follows from \cite[Lemma
7.1]{Passman}.)

\end{proof}

\section{Abelian varieties}
\label{AV}

Let $F$ be a field, $\bar{F}$ its algebraic closure  and
$\Gal(F):=\Aut(\bar{F}/F)$ the absolute Galois group of $F$.

Recall that if $X$ is an abelian variety of positive dimension
 over $\bar{F}$ then we write $\End(X)$ for the ring of all its
$\bar{F}$-endomorphisms and $\End^0(X)$ for the corresponding $\Q$-algebra
$\End(X)\otimes\Q$. We write $\End_F(X)$ for the ring of all  $F$-endomorphisms
of $X$ and $\End_F^0(X)$ for the corresponding  $\Q$-algebra
$\End_F(X)\otimes\Q$ and $\CC$ for the center of $\End^0(X)$. Both $\End^0(X)$
and $\End_F^0(X)$  are semisimple finite-dimensional $\Q$-algebras.

 The absolute Galois  group
$\Gal(F)$ of $F$ acts on $\End(X)$ (and therefore on $\End^0(X)$)
by ring (resp. algebra) automorphisms and
$$\End_F(X)=\End(X)^{\Gal(F)}, \ \End_F^0(X)=\End^0(X)^{\Gal(F)},$$
since every endomorphism of $X$ is defined over a finite separable
extension of $F$.

\begin{thm}
\label{toricONE}
Let $X$ be an abelian variety of positive dimension over a field $K$ such that $\End^0(X)$ is a simple $\Q$-algebra, i.e., its center $\CC$ is a field.
 Suppose that $K$ a discrete valuation field  with discrete  valuation ring $R$ and residue field $k$.
Suppose that there exists a semiabelian group scheme $\X$ over $\Spec(R)$, whose generic fiber coincides with $X$ and the identity component
 $\X_k^{0}$  of the closed fiber $\X_k$ has toric dimension one, i.e., is a commutative algebraic $k$-group that is an extension of an abelian variety  by a one-dimensional algebraic torus.

Then $\End(X)=\Z$.
\end{thm}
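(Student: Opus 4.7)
The plan is to exploit the one-dimensional torus sitting inside the special fiber of $\X$: every $\bar K$-endomorphism of $X$ ought to extend to $\X$, induce an endomorphism of this torus, and hence determine an integer via $\End(\mathbb{G}_m)=\Z$. Assembling this into a ring homomorphism $\chi:\End(X)\to\Z$, the simplicity hypothesis on $\End^{0}(X)$ will force its $\Q$-linear extension $\chi_{\Q}:\End^{0}(X)\to\Q$ to be injective, hence an isomorphism onto $\Q$, which gives $\End(X)=\Z$.

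Here is how I would carry this out in detail. Given $f\in \End(X)$, pick a finite separable extension $K'/K$ over which $f$ is defined, and choose a valuation of $K'$ extending the given one, with DVR $R'$ and residue field $k'$. Then $\X':=\X\times_R R'$ is a semiabelian $R'$-model of $X_{K'}$ whose closed-fiber identity component $G$ still has toric dimension one. Because a semiabelian model is canonically the identity component of the N\'eron model, the N\'eron mapping property lets $f_{K'}$ extend to $\tilde f:\X'\to\X'$; passing to the special fiber and restricting to the identity component, $\tilde f$ induces an endomorphism $\bar f\in\End_{k'}(G)$. By Chevalley's structure theorem the (unique) maximal subtorus $T\subset G$ is characteristic, so $\bar f$ restricts to an endomorphism of $T$. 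Since $\dim T=1$ we have $\End_{k'}(T)=\Z$, so $\bar f|_T=[n_f]$ for a unique integer $n_f$, which is independent of the auxiliary choice of $K'$ by base-change functoriality. The assignment $f\mapsto n_f$ is then a ring homomorphism $\chi:\End(X)\to\Z$ sending $\I$ to $1$.

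Tensoring $\chi$ with $\Q$ yields a nonzero $\Q$-algebra homomorphism $\chi_{\Q}:\End^{0}(X)\to\Q$. As $\End^{0}(X)$ is simple and $\chi_{\Q}(\I)=1$, the two-sided ideal $\ker\chi_{\Q}$ is proper, hence zero; thus $\End^{0}(X)\hookrightarrow\Q$, and since $\End^{0}(X)$ contains $\Q\cdot \I$, this forces $\End^{0}(X)=\Q$ and therefore $\End(X)=\Z$. The main obstacle is the geometric input subsumed in the extension step: that $\X$ is the identity component of the N\'eron model of $X$, that base change of $\X$ to a finite (possibly ramified) extension of DVRs remains semiabelian with the same toric dimension, and that $\bar K$-endomorphisms extend compatibly to the special fiber so that the resulting integer $n_f$ is intrinsic. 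These are standard consequences of the Grothendieck--Raynaud theory of semistable reduction and the N\'eron mapping property, but they constitute the essential geometric content of the argument; once they are in hand, the remaining steps are purely algebraic.
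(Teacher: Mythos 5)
Your proposal is correct and follows essentially the same route as the paper: extend each endomorphism to the semiabelian model (the paper cites Raynaud and Faltings--Chai for this, you invoke the N\'eron mapping property via the identification of $\X$ with the identity component of the N\'eron model), restrict to the one-dimensional torus in the special fiber to obtain a ring homomorphism $\End(X)\to\Z$, and conclude from the simplicity of $\End^{0}(X)$. Your treatment of the base extension $K'/K$ and the characteristic nature of the maximal subtorus is slightly more explicit than the paper's, but the argument is the same.
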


\begin{proof}[Proof of Theorem \ref{toricONE}]
Extending $K$, we may and will assume that all endomorphisms of $X$ are defined over $k$.
 Removing from $\X$ all the irreducible components of $\X_k$ that do not pass through the identity element,
we may and will assume that $\X_k=\X_k^{0}$, i.e., the closed fiber of $\X$ is connected. It is known (\cite[Ch. IX, Cor. 1.4 on p. 130]{Raynaud}, \cite[Ch. 1, Sect. 2, Prop. 2.7, p. 9]{FaltingsChai} that every endomorphism of $X$ extends uniquely to to a certain endomorphism of the group scheme $\X/Spec(R)$. This gives us a  ring homomorphism
$$\End(X) \to \End(\X/\Spec(R))$$ that sends $1$ to $1$.
Composing it with the restriction homomorphism $\End(\X/\Spec(R)) \to \End(\X_k)$, we get a ring homomorphism $\End(X)\to \End(\X_k)$ that sends $1$ to $1$.

Let $\T$ be the one-dimensional torus in  $\X_k$. Clearly, $\End(\T)=\Z$. On the other hand,  every endomorphism of the algebraic $k$-group $\X_k$ leaves invariant $\T$, so we get the restriction ring  homomorphism $\End(\X_k) \to \End(\T)=\Z$ that sends $1$ to $1$. Taking the composition, we get the ring homomorphism
$$\End(X) \to \End(\T)=\Z$$
that sends $1$ to $1$. Extending the latter homomorphism by $\Q$-linearity, we get the homomorphism
$$\End^{0}(X) \to \Q$$
that sends $1$ to $1$. Since $\End^0(X)$ is a simple $\Q$-algebra, the latter homomorphism is an embedding and therefore $\End^0(X)=\Q$. This implies that $\End(X)=\Z$.
\end{proof}

\begin{cor}
\label{simpletoricONE}
Let $X$ be an absolutely simple abelian variety of positive dimension over a field $K$.  Suppose that $K$ a discrete valuation field  with discrete  valuation ring $R$ and residue field $k$. Suppose that there exists a semiabelian group scheme $\X$ over $\Spec(R)$, whose generic fiber coincides with $X$ and the identity componen $\X_k^{0}$  of the closed fiber $\X_k$ has toric dimension one, i.e.,  $\X_k^{0}$ is a commutative algebraic $k$-group that is an extension of  an abelian variety
by a one-dimensional algebraic torus.

Then $\End(X)=\Z$.
\end{cor}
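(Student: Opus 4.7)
The plan is to reduce the corollary to Theorem \ref{toricONE} by observing that absolute simplicity forces $\End^{0}(X)$ to be a simple $\Q$-algebra, after which the hypotheses of Theorem \ref{toricONE} are met verbatim and its conclusion is precisely what is required.

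First I would recall the standard Schur-type fact: if $X$ is an absolutely simple abelian variety, then $X_{\bar K}$ is a simple object in the $\Q$-linear abelian category of abelian varieties over $\bar K$ up to isogeny, so every nonzero element of $\End^{0}(X)=\End(X)\otimes\Q$ is an isogeny and hence invertible. Thus $\End^{0}(X)$ is a (finite-dimensional) division $\Q$-algebra. In particular its center $\CC$ is a field, so $\End^{0}(X)$ is a simple $\Q$-algebra in the sense used in the hypothesis of Theorem \ref{toricONE}.

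Next I would check that the remaining hypotheses of Theorem \ref{toricONE} are verified by assumption: $K$ is a discrete valuation field with valuation ring $R$ and residue field $k$, and a semiabelian model $\X/\Spec(R)$ is given whose generic fiber is $X$ and whose closed fiber $\X_k$ has identity component $\X_k^{0}$ of toric dimension one. These are exactly the geometric hypotheses of Theorem \ref{toricONE}, so the theorem applies and yields $\End(X)=\Z$, which is the desired conclusion.

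There is essentially no obstacle to overcome here; the only point requiring mild care is the logical relation \emph{absolutely simple} $\Longrightarrow$ \emph{$\End^{0}(X)$ is a simple $\Q$-algebra}. One must resist the temptation to confuse this with simplicity of $X$ merely over $K$, since the latter would only give simplicity of $\End_{K}^{0}(X)$, which is not what Theorem \ref{toricONE} requires. Once this distinction is kept in mind, the corollary is an immediate specialization of the theorem.
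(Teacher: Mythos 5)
Your proposal is correct and follows exactly the paper's own argument: absolute simplicity makes $\End^{0}(X)$ a division algebra over $\Q$, hence a simple $\Q$-algebra, so Theorem \ref{toricONE} applies directly. The extra remark distinguishing absolute simplicity from simplicity over $K$ is a sensible precaution but adds nothing beyond the paper's one-line proof.
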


\begin{proof}[Proof of Corollary \ref{simpletoricONE}]
The absolute simplicity of $X$ means that $\End^0(X)$ is a division algebra over $\Q$ and therefore is a simple $\Q$-algebra.
\end{proof}

Let $n$ be a positive integer that is not divisible by $\fchar(F)$. Recall that
 if $X$ is defined over $F$ then $X_n$ is a Galois
submodule in $X(\bar{F})$,   all points of $X_n$ are defined over a finite
separable extension of $F$ and we write $\bar{\rho}_{n,X,F}:\Gal(F)\to
\Aut_{\Z/n\Z}(X_n)$ for the corresponding homomorphism defining the structure
of the Galois module on $X_n$,
$$\tilde{G}_{n,X,F}\subset
\Aut_{\Z/n\Z}(X_{n})$$ for its image $\bar{\rho}_{n,X,F}(\Gal(F))$. We write
 $F(X_n)$ for the field of definition of all points of $X_n$.
Clearly, $F(X_n)$ is a finite Galois extension of $F$ with Galois
group $\Gal(F(X_n)/F)=\tilde{G}_{n,X,F}$. If $n=2$  then we get a
natural faithful linear representation
$$\tilde{G}_{2,X,F}\subset \Aut_{\F_{2}}(X_{2})$$
of $\tilde{G}_{2,X,F}$ in the $\F_{2}$-vector space $X_{2}$.

If $F_1/F$ is a finite algebraic extension then $F_1(X_n)$ coincides with the
compositum $F_1 F(X_n)$ of $F_1$ and $F(X_n)$

\begin{lem}
\label{goursat2}
 Let $F_1/F$ be a finite solvable Galois extension of fields.
If $\tilde{G}_{n,X,F}$ is a simple nonabelian group then $F_1$ and $F(X_n)$ are
linearly disjoint over $F$ and $\tilde{G}_{n,X,F_1}=\tilde{G}_{n,X,F}$.
\end{lem}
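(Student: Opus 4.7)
\smallskip

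\noindent\textbf{Plan.} The argument is a standard Goursat / Galois-theoretic manipulation; the key fact used is simplicity of $\tilde{G}_{n,X,F}$ together with the solvability of $\Gal(F_1/F)$.

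First I would observe that $F(X_n)/F$ is a finite Galois extension with Galois group canonically $\tilde{G}_{n,X,F}$, and set $E := F_1\cap F(X_n)$. Since $E$ is the intersection of two Galois extensions of $F$, it is itself Galois over $F$; in particular $\Gal(F(X_n)/E)$ is a \emph{normal} subgroup of $\Gal(F(X_n)/F)=\tilde{G}_{n,X,F}$. Simplicity of $\tilde{G}_{n,X,F}$ leaves only two possibilities: $E=F(X_n)$ or $E=F$.

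Next I would rule out the first possibility. If $E=F(X_n)$ then $F(X_n)\subset F_1$, and then $\Gal(F(X_n)/F)$ is a quotient of the solvable group $\Gal(F_1/F)$, hence itself solvable. This contradicts the hypothesis that $\tilde{G}_{n,X,F}$ is simple \emph{nonabelian}. Therefore $E=F$, which, since both $F_1/F$ and $F(X_n)/F$ are Galois, is equivalent to $F_1$ and $F(X_n)$ being linearly disjoint over $F$.

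Finally I would pass from linear disjointness to the statement about mod-$n$ images. As recorded just before the lemma, $F_1(X_n)=F_1\cdot F(X_n)$. The restriction map identifies $\tilde{G}_{n,X,F_1}=\Gal(F_1\cdot F(X_n)/F_1)$ with a subgroup of $\Gal(F(X_n)/F)=\tilde{G}_{n,X,F}$, and linear disjointness gives
\[
[F_1\cdot F(X_n):F_1]=[F(X_n):F],
\]
so this subgroup has full order and hence equals $\tilde{G}_{n,X,F}$.

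\smallskip

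\noindent\textbf{Where the work is.} There is no real obstacle: the only input beyond elementary Galois theory is that a nontrivial quotient of a solvable group cannot be simple nonabelian. The slight care needed is just to justify that $E=F_1\cap F(X_n)$ is Galois over $F$ (so that $\Gal(F(X_n)/E)$ is normal in $\tilde{G}_{n,X,F}$), after which simplicity does all the work.
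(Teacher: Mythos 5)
Your argument is correct and complete: the intersection $E=F_1\cap F(X_n)$ is Galois over $F$, simplicity forces $E=F$ or $E=F(X_n)$, solvability of $\Gal(F_1/F)$ rules out the latter, and linear disjointness then identifies $\tilde{G}_{n,X,F_1}$ with $\tilde{G}_{n,X,F}$ via restriction. The paper itself gives no proof here, only the citation to Lemma 3.2 of \cite{ZarhinPLMS2}; your reasoning is the standard (Goursat-type) argument that lemma rests on, so there is nothing to add.
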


\begin{proof}
This is Lemma 3.2 of \cite{ZarhinPLMS2}.
\end{proof}

Now and until the end of this Section we assume  that $\fchar(F)\ne 2$. It is
known \cite{Silverberg} that all endomorphisms of $X$ are defined over
$F(X_4)$; this gives rise to the natural homomorphism
$$\kappa_{X,4}:\tilde{G}_{4,X,F} \to \Aut(\End^0(X))$$ and
$\End_F^0(X)$ coincides with the subalgebra
$\End^0(X)^{\tilde{G}_{4,X,F}}$ of $\tilde{G}_{4,X,F}$-invariants
\cite[Sect. 1]{ZarhinLuminy}.

The field inclusion $F(X_2)\subset F(X_4)$ induces a natural
surjection \cite[Sect. 1]{ZarhinLuminy}
$$\tau_{2,X}:\tilde{G}_{4,X,F}\twoheadrightarrow\tilde{G}_{2,X,F}.$$

\begin{defn} We say that $F$ is 2-{\sl balanced} with respect to
$X$ if $\tau_{2,X}$ is a minimal cover. (See \cite{ElkinZ}.)
\end{defn}

\begin{rem}
\label{overL}
 Clearly, there always exists a subgroup $H
\subset \tilde{G}_{4,X,F}$ such that the induced homomorphism
$H\to\tilde{G}_{2,X,F}$ is surjective and a minimal cover. Let us put
$L=F(X_4)^H$. Clearly,
$$F \subset L \subset F(X_4), \ L\bigcap F(X_2)=F$$
and $L$ is a maximal overfield of $F$ that enjoys these properties. It is also
clear that $H$ and $L$ can be chosen that
$$F \subset L \subset F(X_4), \ L\bigcap F(X_2)=F,$$
$$F(X_2)\subset L(X_2),\ L(X_4)=F(X_4), \
\tilde{G}_{2,X,L}=\tilde{G}_{2,X,F}$$
 and $L$ is $2$-{\sl balanced}   with respect to
$X$ (\cite[Remark 2.3]{ElkinZ}; see also \cite{ElkinZ2}).
\end{rem}

We will need the following  result from our previous work.

\begin{lem}
\label{Ksimple} Assume that $X_2$ does not contain  a proper nonzero
$\tilde{G}_{2,X,F}$-invariant even-dimensional subspace and the centralizer
$\End_{\tilde{G}_{2,X,F}}(X_2)$ has $\F_2$-dimension $2$.

Then $X$ is $F$-simple and $\End_F^0(X)$ is either $\Q$ or a
quadratic field.
\end{lem}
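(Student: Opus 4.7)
My plan is to deduce $F$-simplicity from hypothesis (i) directly, and then to exploit hypothesis (ii) as a bound on the rank of the $F$-endomorphism ring.

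First I would show that $X$ is $F$-simple. Assume for contradiction that $X$ contains an abelian subvariety $Y$ defined over $F$ with $0<\dim Y<\dim X$. Then $Y_2=Y[2]$ is a $\Gal(F)$-stable (hence $\tilde G_{2,X,F}$-invariant) $\F_2$-subspace of $X_2$ of even dimension $2\dim Y$, and it is neither $0$ nor all of $X_2$. This contradicts the hypothesis that $X_2$ contains no proper nonzero $\tilde G_{2,X,F}$-invariant even-dimensional subspace. So $X$ is $F$-simple, and $\End_F^0(X)$ is a finite-dimensional division $\Q$-algebra.

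Next I would bound the $\Q$-dimension of $\End_F^0(X)$ via the natural action on $X_2$. Every $F$-endomorphism acts $\F_2$-linearly on $X_2$ and commutes with the Galois action, giving a ring homomorphism
\[
r\colon \End_F(X)\to \End_{\tilde G_{2,X,F}}(X_2).
\]
Its kernel consists of $F$-endomorphisms killing $X_2=\ker[2]$; by the standard divisibility property, such an endomorphism has the form $2\psi$ with $\psi\in\End(X)$, and the identity $2\psi^{\sigma}=(2\psi)^{\sigma}=2\psi$ forces $\psi\in\End_F(X)$. Thus $\ker r=2\End_F(X)$ and $r$ induces an injection
\[
\End_F(X)/2\End_F(X)\hookrightarrow \End_{\tilde G_{2,X,F}}(X_2).
\]

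Finally I would conclude by dimension count. Since $\End_F(X)$ is a free $\Z$-module of rank $\dim_{\Q}\End_F^0(X)$, the left-hand side has $\F_2$-dimension equal to $\dim_{\Q}\End_F^0(X)$. By hypothesis (ii), $\dim_{\F_2}\End_{\tilde G_{2,X,F}}(X_2)=2$, so $\dim_{\Q}\End_F^0(X)\le 2$. A division $\Q$-algebra of dimension at most $2$ is either $\Q$ itself or a quadratic field (any two-dimensional division $\Q$-algebra is automatically commutative). This completes the proof. I do not anticipate any serious obstacle; the only point that requires a moment of care is the identification $\ker r=2\End_F(X)$, which follows from the standard factoring-through-$[2]$ argument combined with the uniqueness of the factorization.
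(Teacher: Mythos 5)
Your proof is correct and takes essentially the same route as the cited source (the paper itself only refers to Lemma 3.4 of \cite{ZarhinMA} here): $F$-simplicity follows from applying the no-even-dimensional-invariant-subspace hypothesis to $Y[2]$ for a putative $F$-subvariety $Y$, and the bound $\dim_{\Q}\End_F^0(X)\le 2$ comes from the injection $\End_F(X)/2\End_F(X)\hookrightarrow \End_{\tilde{G}_{2,X,F}}(X_2)$, whose kernel identification via factoring through $[2]$ you handle correctly. No gaps.
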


\begin{proof} This is Lemma 3.4  of \cite{ZarhinMA}.
\end{proof}

\begin{thm}
\label{mainAV} Let $g\ge 3$ be an integer and $B$ a $2g$-element set. Let $X$
be a $g$-dimensional abelian variety over $F$. Suppose that there exists a
group isomorphism $\tilde{G}_{2,X,F}\cong \Alt(B)$ such that the
$\Alt(B)$-module $X_2$ is isomorphic to $\F_2^B$.

Then the center $\CC$ of $\End^0(X)$ is a field, i.e., $\End^0(X)$ is a finite-dimensional simple $\Q$-algebra.
\end{thm}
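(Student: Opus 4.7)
The plan is to combine three ingredients set up above: Lemma \ref{AA8}, which describes the $\Alt(B)$-module $\F_2^B$; Lemma \ref{Ksimple}, which translates that description into $F$-simplicity of $X$; and Lemma \ref{An}, which controls low-index subgroups of minimal covers of $\A_{2g}$. A passage to a $2$-balanced overfield via Remark \ref{overL} will then let us combine these at the level of the mod-$4$ representation.

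First I would replace $F$ by the overfield $L$ supplied by Remark \ref{overL}, so that $L$ is $2$-balanced with respect to $X$, $\tilde{G}_{2,X,L}=\tilde{G}_{2,X,F}\cong \Alt(B)$, and the $\Alt(B)$-module structure on $X_2$ is preserved. Since the center $\CC$ of $\End^0(X)$ does not depend on the base field, it suffices to prove the claim over $L$. By the hypothesis and Lemma \ref{AA8}, the $\tilde{G}_{2,X,L}$-module $X_2$ has $2$-dimensional centralizer and no proper nonzero invariant even-dimensional subspace, so Lemma \ref{Ksimple} tells us that $X$ is $L$-simple. On the other hand, $2$-balancedness means $\tau_{2,X}\colon \tilde{G}_{4,X,L}\twoheadrightarrow \tilde{G}_{2,X,L}\cong \A_{2g}$ is a minimal cover, and by Section \ref{cover} together with Lemma \ref{An}, every subgroup of $\tilde{G}_{4,X,L}$ of index strictly less than $2g$ coincides with $\tilde{G}_{4,X,L}$.

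Next I would decompose $X$ up to isogeny over $\bar{F}$ into isotypic factors $X\sim Y_1^{n_1}\times\cdots\times Y_s^{n_s}$ with pairwise non-isogenous absolutely simple $Y_i$. By the theorem of Silverberg the Galois action on the set of isotypic components factors through $\tilde{G}_{4,X,L}$, and $L$-simplicity of $X$ forces this action to be transitive. Hence $n_1=\cdots=n_s=:n$, $\dim Y_1=\cdots=\dim Y_s=:d$, and $g=snd$; in particular $s\le g<2g$. The stabilizer of one isotypic component then has index $s<2g$ in $\tilde{G}_{4,X,L}$, so by the minimal cover property it must be all of $\tilde{G}_{4,X,L}$, forcing $s=1$. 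Since $s$ is the number of simple factors of the semisimple $\Q$-algebra $\End^0(X)$, the center $\CC$ is a field.

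The only genuine conceptual step is the reduction to the $2$-balanced extension $L$; once that is in place the numerical bound $s\le g<2g$ is automatic from $g=snd$, and Lemma \ref{An} closes the argument. I therefore do not foresee a substantive obstacle beyond keeping careful track of which Galois group acts on what.
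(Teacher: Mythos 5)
Your proposal is correct and follows essentially the same route as the paper: reduce to a $2$-balanced extension via Remark \ref{overL}, apply Lemmas \ref{AA8} and \ref{Ksimple} to get simplicity over the base field, and use the minimal-cover property together with Lemma \ref{An} to rule out proper subgroups of $\tilde{G}_{4,X,L}$ of index $<2g$. The only (minor) difference is the endgame: you let $\tilde{G}_{4,X,L}$ act transitively on the $s\le g$ isotypic components and conclude $s=1$ by orbit--stabilizer, whereas the paper shows each central summand $\CC_i$ is individually Galois-invariant and then derives a contradiction from the resulting zero divisors in $\End_F^0(X)$, which Lemma \ref{Ksimple} says is $\Q$ or a quadratic field; both conclusions are immediate from the same index bound.
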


 \begin{proof}[Proof of Theorem \ref{mainAV}]
 By Remark \ref{overL}, we may and will assume that $F$ is 2-{\sl balanced} with respect to
 $X$, i.e., $\tau_{2,X}:\tilde{G}_{4,X,F}\twoheadrightarrow
\tilde{G}_{2,X,F}=\A_{2g}$ is a minimal cover. In particular,
$\tilde{G}_{4,X,F}$ is perfect, since $\A_{2g}$ is perfect. Since $\A_{2g}$
does not contain a  subgroup of index $<2g$
 different from $\A_{2g}$,
 it
follows from Lemma \ref{An}(i) that $\tilde{G}_{4,X,F}$  does not contain a
proper subgroup of index $<2g$
 different from $\tilde{G}_{4,X,F}$.
Now Lemmas
\ref{Ksimple} and \ref{AA8} imply that
$\End_F^0(X)$ is either $\Q$ or a quadratic field.

Recall that $\CC$ is the center of $\End^0(X)$.
Suppose that $\CC$ is {\sl not} a field. Then it is a direct sum
$$\CC=\oplus_{i=1}^r \CC_i$$
of number fields $\CC_1, \dots , \CC_r$ with $1<r\le \dim(X)=g$. Clearly, the
center $\CC$ is a $\tilde{G}_{4,X,F}$-invariant subalgebra of $\End^0(X)$; it
is also clear that $\tilde{G}_{4,X,F}$ permutes the summands $\CC_i$'s. Since
$\tilde{G}_{4,X,F}$ does not contain proper subgroups of index $\le g$, each
$\CC_i$ is $\tilde{G}_{4,X,F}$-invariant. This implies that the $r$-dimensional
$\Q$-subalgebra
$$\oplus_{i=1}^r \Q \subset \oplus_{i=1}^r \CC_i$$
consists of $\tilde{G}_{4,X,F}$-invariants and therefore lies in
 $\End_F^0(X)$. It follows that $\End_F^0(X)$ has zero divisors,
 which is not the case. The obtained contradiction proves that $\CC$
 is a field.
\end{proof}

\begin{cor}
\label{mainAVcor} Let $g\ge 3$ be an integer and $B$ a $2g$-element set. Let $X$
be a $g$-dimensional abelian variety over $F$. Suppose that there exists a
group isomorphism $\tilde{G}_{2,X,F}\cong \Alt(B)$ such that the
$\Alt(B)$-module $X_2$ is isomorphic to $\F_2^B$. Assume additionally that there exists a finite algebraic  field extension $E/F$ such that
$E$ is a discrete valuation field with discrete valuation ring $R$ and residue field $k$ such that the N\'eron model of $X$
over $\Spec(R)$ is a semiabelian group scheme, whose closed
fiber has toric dimension $1$.

Then $\End(X)=\Z$.
\end{cor}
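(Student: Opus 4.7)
The plan is to combine Theorem \ref{mainAV} with Theorem \ref{toricONE}. The hypotheses on $\tilde{G}_{2,X,F}$ and on the $\Alt(B)$-module structure of $X_2$ are exactly those of Theorem \ref{mainAV}, which delivers that the center $\CC$ of $\End^0(X)$ is a field, i.e., $\End^0(X)$ is a (finite-dimensional) simple $\Q$-algebra.

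Next I would pass to the field $E$ on which the semiabelian model is given. Since $\End(X)$ means endomorphisms over $\bar{F}$, and $\bar{F}$ is also an algebraic closure of $E$, we have $\End(X_E)=\End(X)$ and hence $\End^0(X_E)=\End^0(X)$, which is still a simple $\Q$-algebra. Thus the hypothesis of Theorem \ref{toricONE} about the endomorphism algebra being simple is inherited by $X$ viewed as an abelian variety over $E$.

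Now I would apply Theorem \ref{toricONE} to $X/E$: the field $E$ is a discrete valuation field with ring $R$ and residue field $k$, and by hypothesis the N\'eron model $\X$ of $X$ over $\Spec(R)$ is a semiabelian group scheme whose closed fiber (equivalently, its identity component) has toric dimension $1$. All assumptions of Theorem \ref{toricONE} are then satisfied, and we conclude $\End(X)=\End(X_E)=\Z$.

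There is essentially no obstacle beyond the bookkeeping of noting that the endomorphism ring over $\bar F$ agrees with the endomorphism ring over $\bar E$; the two quoted results dovetail exactly. The content of the corollary is really the observation that Theorem \ref{mainAV} reduces the problem from ``$\End^0(X)$ is semisimple'' to ``$\End^0(X)$ is simple,'' which is precisely the input required by Theorem \ref{toricONE} to deduce $\End(X)=\Z$ from the existence of a semiabelian model with one-dimensional toric part.
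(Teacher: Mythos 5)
Your argument is correct and coincides with the paper's own proof, which likewise deduces the corollary by combining Theorem \ref{mainAV} (simplicity of $\End^0(X)$) with Theorem \ref{toricONE} applied over $E$. The bookkeeping you supply about $\End(X)$ being unchanged under the base extension $F\subset E$ is exactly the implicit step in the paper's one-line proof.
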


\begin{proof}
 The result follows readily from Theorem \ref{mainAV} combined with Theorem \ref{toricONE}.
\end{proof}

\section{Abelian varieties with semistable reduction and toric dimension one}
This section is a variation on a theme of \cite{HallBLMS} (see also \cite{Gajda}).

Let $X$ be an abelian variety of positive dimension over a field $K$ with polarization $\lambda$ and let $\ell$ be a prime different from $\fchar(K)$. Let us consider the  $2\dim(X)$-dimensional $\Q_{\ell}$-vector space
$$V_{\ell}(X)=T_{\ell}(X)\otimes_{\Z_{\ell}}\Q_{\ell}.$$
One may view $T_{\ell}(X)$ as a $
\Z_{\ell}$-lattice of maximal rank; the Galois action on $T_{\ell}(X)$ extends by $\Q_{\ell}$-linearity to $V_{\ell}(X)$ and we may view $\rho_{\ell,X}$ as the $\ell$-adic representation
$$\rho_{\ell,X}:\Gal(K) \to \Aut_{\Z_{\ell}}(T_{\ell}(X)) \subset \Aut_{\Q_{\ell}}(V_{\ell}(X)).$$
and its image $G_{\ell,X,K}$ as a compact $\ell$-adic subgroup in $\Aut_{\Q_{\ell}}(V_{\ell}(X))$ \cite{SerreAbelian}. We write
$$\g_{\ell,X}\subset \End_{\Q_{\ell}}(V_{\ell}(X))$$
for the Lie algebra of $G_{\ell,X,K}$: it is a $\Q_{\ell}$-linear Lie subalgebra of $\End_{\Q_{\ell}}(V_{\ell}(X)$ that would not change if one replaces $K$ by its finite algebraic extension.
On the other hand, extending $e_{\lambda,\ell}$ by $\Q_{\ell}$-linearity to $V_{\ell}(X)$ from $T_{\ell}(X)$, we obtain the nondegenerate alternating $\Q_{\ell}$-bilinear form
$$V_{\ell}(X) \times V_{\ell}(X) \to \Q_{\ell},$$
which we continue to denote by $e_{\ell,\lambda}$.
We have
$$G_{\ell,X,K}\subset \Gp(T_{\ell}(X),e_{\ell,\lambda})\subset \Gp(V_{\ell}(X),e_{\ell,\lambda})\subset \Aut_{\Q_{\ell}}(V_{\ell}(X)).$$
It is well known that the Lie algebra $\gp(V_{\ell}(X),e_{\ell,\lambda})$ of the $\ell$-adic Lie group $\Gp(V_{\ell}(X),e_{\ell,\lambda})$ coincides with the direct sum
$\Q_{\ell}\I\oplus \sp(V_{\ell}(X),e_{\ell,\lambda})$ where $\I: V_{\ell}(X) \to V_{\ell}(X)$ is the identity map and $\sp(V_{\ell}(X),e_{\ell,\lambda})$ is the Lie algebra of the $\ell$-adic symplectic Lie group
$\Sp(V_{\ell}(X),e_{\ell,\lambda})$. We have
$$\g_{\ell,X}\subset \Q_{\ell}\I\oplus \sp(V_{\ell}(X),e_{\ell,\lambda})\subset \End_{\Q_{\ell}}(V_{\ell}(X)).$$
Notice that the open compact subgroup $\Gp(T_{\ell}(X),e_{\ell,\lambda})$ of $\Gp(V_{\ell}(X),e_{\ell,\lambda})$  has the same Lie algebra
$\Q_{\ell}\I\oplus \sp(V_{\ell}(X),e_{\ell,\lambda})$ as $\Gp(V_{\ell}(X),e_{\ell,\lambda})$.

Now assume that $K$ is finitely generated over its prime subfield and $\End(X)=\Z$.
According to results of \cite{ZarhinMZ1, F2,MB} (where the Tate conjecture for homomorphisms of abelian varieties and semisimplicity of Tate modules were proven), for every finite separable algebraic extension
$K_1$ of $K$ the $\Gal(K_1)$-module $V_{\ell}(X)$ is absolutely simple. We claim
 that
for every open subgroup $G_1$ of finite index in $G_{\ell,X,K}$ the $G_1$-module $V_{\ell}(X)$ is absolutely simple.
Indeed, the preimage $\rho_{\ell,X}^{-1}(G_1)$ is an open subgroup of finite index in $\Gal(K)$ and therefore coincides with $\Gal(K_1)$ for a certain  finite separable algebraic extension $K_1/K$; in addition,
$\rho_{\ell,X}(\Gal(K_1))=G_1$. It follows that
$$\Q_{\ell}=\End_{\Gal(K_1)}(V_{\ell}(X))=\End_{G_1}(V_{\ell}(X))$$
and we are done if we know that the $G_1$-module $V_{\ell}(X)$ is semisimple. However, if $G_1$ is normal in $G_{\ell,X,K}$ then the (semi)simplicity of the $G_{\ell,X,K}$-module $V_{\ell}(X)$
 implies the semisimplicity of the $G_1$-module $V_{\ell}(X)$, thanks to a theorem of Clifford \cite[Sect, 49, Th. (49.2)]{CR}. In order to do the general case of not necessarily normal $G_1$, notice that
every $G_1$ contains an open subgroup $G_2$ that is a normal (open) subgroup of finite index in $G_{\ell,X,K}$ that is the kernel of the natural continuous
 homomorphism from $G$ to the group of permutations
of the finite set $G_{\ell,X,K}/G_1$. We get that $V_{\ell}(X)$ is an absolutely simple $G_2$-module. Since $G_1$ contains $G_2$, it follows that the $G_1$-module $V_{\ell}(X)$ is also absolutely simple.

Applying Lemma 7.1 of \cite{ZarhinPLMS2} to $V=V_{\ell}(X), G=G_{\ell,X,K}, e=e_{\ell,\lambda}$,  we conclude that there exists a semisimple $\Q_{\ell}$-Lie algebra
$$\g^{\ss}=\g_{\ell}^{\ss}\subset \sp(V_{\ell}(X),e_{\ell,\lambda})\subset\End_{\Q_{\ell}}(V_{\ell}(X))$$
such that either $\g_{\ell,X}=\g^{\ss}$ or $\g_{\ell,X}=\Q_{\ell}\I\oplus\g^{\ss}$.
In addition.
$$\g^{\ss}=\g_{\ell}^{\ss}\subset \sp(V_{\ell}(X), e_{\ell,\lambda}).$$
Since $\End(X)=\Z$, it follows from \cite[Cor. 1.3.1]{ZarhinMZ3} (see also \cite{Bogomolov1,Bogomolov2}) that the center of  $\g_{\ell,X}$ coincides with $\Q_{\ell}\I$ and therefore
$$\g_{\ell,X}=\Q_{\ell}\I\oplus\g^{\ss}=\Q_{\ell}\I\oplus\g_{\ell}^{\ss}\subset \Q_{\ell}\I\oplus \sp(V_{\ell}(X),e_{\ell,\lambda}).$$
Clearly, the semisimple  linear $\Q_{\ell}$-Lie algebra
$$\g_{\ell}^{\ss}\subset\End_{\Q_{\ell}}(V_{\ell}(X))$$ is absolutely irreducible.

\begin{rem}
 \label{LieSumplectic}
Assume that $\g_{\ell}^{\ss}=\sp(V_{\ell}(X), e_{\ell,\lambda})$.
Then the Lie algebra $\Q_{\ell}\I\oplus\g_{\ell}^{\ss}$ of $G_{\ell,X.K}$ coincides with the Lie algebra
$\Q_{\ell}\I\oplus \sp(V_{\ell}(X),e_{\ell,\lambda})$ of compact $\Gp(T_{\ell}(X),e_{\ell,\lambda})$ and
therefore  $G_{\ell,X.K}$ is an open subgroup of finite index in  $\Gp(T_{\ell}(X),e_{\ell,\lambda})$.
\end{rem}

\begin{rem}
 \label{rankONE}
Suppose that the absolutely irreducible linear Lie algebra
 $$\g_{\ell,X}\subset \End_{\Q_{\ell}}(V_{\ell}(X))$$
contains a linear operator $V_{\ell}(X) \to V_{\ell}(X)$ of rank one. Let us look at
the classification (in characteristic zero) of absolutely irreducible linear Lie algebras with operator of rank one \cite{GQS} (see also \cite[Ch. 8, sect. 13, ex. 15]{Bourbaki}).
The list  consists of $\End_{\Q_{\ell}}(V_{\ell}(X))$, the Lie algebra $\sL(V_{\ell}(X))$ of all  operators with zero trace, $\sp(V_{\ell}(X))$ and $\Q_{\ell}\I\oplus \sp(V_{\ell}(X))$
where $\sp(V_{\ell}(X))$ is the Lie algebra of the symplectic group of a certain nondegenerate alternating bilinear form on $V_{\ell}(X)$. Since
$$\Q_{\ell}\I \subset \g_{\ell,X} \subset \Q_{\ell}\I\oplus \sp(V_{\ell}(X),e_{\ell,\lambda}),$$
we conclude that $\g_{\ell,X}$ coincides with $\Q_{\ell}\I\oplus \sp(V_{\ell}(X),e_{\ell,\lambda})$.
By Remark \ref{LieSumplectic}, $G_{\ell,X,K}$ is an open subgroup of finite index in  $\Gp(T_{\ell}(X),e_{\ell,\lambda})$.
\end{rem}

\begin{thm}
\label{tateToricOne}
Suppose that $K$ is finitely generated over its prime subfield and $\End(X)=\Z$. Assume additionally that there exists a finite algebraic  field extension $E/K$ such that
$E$ is a discrete valuation field with discrete valuation ring $R$ and residue field $k$ such that the N\'eron model $\X$ of $X$
over $\Spec(R)$ is a semiabelian group scheme, whose closed
fiber has toric dimension $1$. Suppose that $\fchar(k) \ne \ell$.
Then $G_{\ell,X,K}$ is an open subgroup of finite index in  $\Gp(T_{\ell}(X),e_{\ell,\lambda})$.
\end{thm}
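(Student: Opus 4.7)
The plan is to leverage the semistable reduction with toric dimension one via Grothendieck's monodromy theorem to exhibit a rank-one nilpotent operator in $\g_{\ell,X}$, and then to invoke Remark \ref{rankONE} to force $\g_{\ell,X}=\Q_{\ell}\I\oplus\sp(V_{\ell}(X),e_{\ell,\lambda})$; Remark \ref{LieSumplectic} then yields the conclusion.

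First I would reduce to the case $K=E$. Since the Lie algebra $\g_{\ell,X}$ does not change upon replacing $K$ by a finite algebraic extension, and since openness of finite index in $\Gp(T_{\ell}(X),e_{\ell,\lambda})$ can be detected by the equality of Lie algebras (as used in Remark \ref{LieSumplectic}), I may and will assume that $K=E$ is itself the discretely valued field carrying the semistable model $\X$ whose closed fiber has toric dimension one.

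The heart of the argument is to produce a rank-one operator in $\g_{\ell,X}$. Let $v$ denote the given discrete valuation on $K$, let $K_v$ be the completion, and fix an embedding $\Gal(K_v^{\mathrm{sep}}/K_v)\hookrightarrow \Gal(K)$; let $I_v$ be the corresponding inertia subgroup. The pullback of $\X$ to the completed valuation ring remains semiabelian with one-dimensional toric part. By Grothendieck's semistable reduction theorem (SGA 7, I), the image $\rho_{\ell,X}(I_v)$ acts unipotently on $T_{\ell}(X)$ through its tame pro-$\ell$ quotient; moreover, for a topological generator $\sigma$ of this quotient, the (finite) logarithm $N=\log(\rho_{\ell,X}(\sigma))$ is a nilpotent endomorphism of $V_{\ell}(X)$ whose rank equals the toric dimension of $\X_{k}^{0}$, hence equals one. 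Unipotence of $\rho_{\ell,X}(\sigma)$ ensures that $t\mapsto \rho_{\ell,X}(\sigma)^{t}$ ($t\in\Z_{\ell}$) is a one-parameter subgroup of $G_{\ell,X,K}$ with infinitesimal generator $N$, so $N\in\g_{\ell,X}$.

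Finally, the discussion preceding the theorem establishes, using $\End(X)=\Z$, that $\g_{\ell,X}=\Q_{\ell}\I\oplus\g^{\ss}_{\ell}$ with $\g^{\ss}_{\ell}\subset\sp(V_{\ell}(X),e_{\ell,\lambda})$ acting absolutely irreducibly on $V_{\ell}(X)$. Thus $\g_{\ell,X}$ is an absolutely irreducible linear Lie algebra containing a rank-one operator, so Remark \ref{rankONE} forces $\g_{\ell,X}=\Q_{\ell}\I\oplus\sp(V_{\ell}(X),e_{\ell,\lambda})$, and Remark \ref{LieSumplectic} delivers the claim. The main obstacle is the identification of the rank of $N$ with the toric dimension: this rests on Grothendieck's monodromy filtration of $V_{\ell}(X)$, whose extreme weight graded pieces are both identified (up to Tate twist) with the character lattice of the toric part of $\X_{k}^{0}$, and on which $N$ induces an isomorphism between them.
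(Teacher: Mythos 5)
Your proposal is correct and follows essentially the same route as the paper: reduce to $K=E$, pass to a local (the paper uses the henselization rather than the completion) inertia group, use Grothendieck's semistable reduction results to produce a unipotent $\rho_{\ell,X}(\sigma)$ whose logarithm is a rank-one operator in $\g_{\ell,X}$, and conclude via Remarks \ref{rankONE} and \ref{LieSumplectic}. The only cosmetic difference is in how the rank of $N$ is identified with the toric dimension: you invoke the monodromy filtration, while the paper deduces it from the fact that the orthogonal complement of $T_{\ell}(X)^{\II}$ under $e_{\ell,\lambda}$ is the rank-one toric part, so that $V_{\ell}(X)^{\II}$ has codimension one.
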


\begin{proof}
 Replacing $K$  by $E$, we may and will assume that $E=K$. So, $K$ is the discrete valuation field with discrete valuation ring $\Oc$ and residue field $k$, the N\'eron model $\X$ of $X$ over $\Oc$ is a semiabelian scheme (with generic fiber $X$) such that the identity component of its closed  fiber
(over $k$)
 is an extension of a   $(\dim(X)-1)$-dimensional abelian variety by a one-dimensional torus.

 Let us choose a {\sl henselization} $\Oc^{h}\subset \bar{K}$ of $\Oc$ \cite[Sect. 2.3]{Neron}; it is a henselian discrete valuation ring containing $\Oc$ that has the same residue field $k$, and any uniformizer of $\Oc$ is also an uniformizer of $\Oc^h$.
 The field $K^h$ of fractions of  $\Oc^h$ is a discrete valuation field containing $K$. Since
$$K \subset K^h \subset \bar{K},$$
we may view $\Gal(K^h)$ as a (closed) subgroup of $\Gal(K)$.
  Let $\II \subset \Gal(K^h)$ be the corresponding inertia (sub)group \cite[Sect. 2.3, Prop. 11]{Neron}. We have
$$\II \subset \Gal(K^h)\subset \Gal(K).$$

It is known  \cite[Sect. 7.2,  Th. 1 and Cor. 2]{Neron} that the N\'eron model $\X^{h}$ of $X$ over $\Oc^h$ is canonically isomorphic to $\X \otimes_{\Oc}\Oc^{h}$. In particular, $X$ has semistable reduction over $K^h$ and  the identity component
%${{\X^{h}}_k}^0$
 of its closed  fiber ${\X^{h}}_k$ is a commutative algebraic group
over $k$ that
 is an extension of a   $(\dim(X)-1)$-dimensional abelian variety by a one-dimensional torus; we denote this torus by $\T_0$. One may identify the $\ell$-adic  Tate module $T_{\ell}(\T_0)$ of $\T_0$ with a certain rank $\dim(\T_0)$ free $\Z_{\ell}$-submodule   $W$ of $T_{\ell}(X)$ that is called the {\sl toric part} of $T_{\ell}(X)$ \cite[Sect. 2.3]{GrothendieckN}.  (In our case $W$ has rank $1$.)

 Let  $T_{\ell}(X)^{\II}$ be the $\Z_{\ell}$-submodule of $\II$-invariants in $\T_{\ell}(X)$.
By   Grothendieck's criterion of semistable reduction \cite[Prop. 3.5(iii) on p. 350]{GrothendieckN}, the orthogonal complement of $T_{\ell}(X)^{\II}$ in  $T_{\ell}(X)$ with respect to $e_{\ell,\lambda}$ coincides with $W$. Since $e_{\ell,\lambda}$ is nondegenerate, the rank arguments imply that $T_{\ell}(X)^{\II}$ is a free $\Z_{\ell}$-module of rank $2\dim(X)-1$.  It follows easily that the $\Q_{\ell}$-vector subspace $V_{\ell}(X)^{\II}$ of $\II$-invariants has codimension $1$ in $V_{\ell}(X)$.
It follows that there exists
$$\sigma \in \II \subset \Gal(K^h)\subset \Gal(K)$$
 such that the subspace of $\sigma$-invariants in $V_{\ell}(X)$ has codimension $1$. This implies that the linear operator
$$u:=\rho_{\ell,X}(\sigma)-\I: V_{\ell}(X) \to V_{\ell}(X)$$
has rank one. The other part of the same criterion of Grothendieck \cite[Prop. 3.5(iv)]{GrothendieckN} implies that $\rho_{\ell,X}$ is an unipotent linear operator in $V_{\ell}(X)$; more precisely,
$$[\rho_{\ell,X}(\sigma)-\I]^2=0 \in \End_{\Q_{\ell}}(V_{\ell}(X)),$$
since the reduction is semistable. Then the $\ell$-adic logarithm
$\log(\rho_{\ell,X}(\sigma))$ of $\rho_{\ell,X}(\sigma)\in G_{\ell,X,K}$ equals $\rho_{\ell,X}(\sigma)-\I$ and therefore
coincides with $u$. Since $\log(\rho_{\ell,X}(\sigma))$ lies in the Lie algebra $\g_{\ell,X}$ of $G_{\ell,X,K}$, we conclude that $u$
is the desired operator of rank one in $\g_{\ell,X}$. Now Remark \ref{rankONE} implies that
$$\g_{\ell,X}=\Q_{\ell}\I\oplus \sp(V_{\ell}(X),e_{\ell,\lambda})$$
and $G_{\ell,X.K}$ is an open subgroup of finite index in  $\Gp(T_{\ell}(X),e_{\ell,\lambda})$.
\end{proof}

\begin{thm}
 \label{hallGajda}
We keep the notation and assumptions of Theorem \ref{tateToricOne}. Then for all but finitely many primes $\ell$ the group
$\tilde{G}_{\ell,X,K}$ contains $\Sp(X_{\ell},\bar{e}_{\lambda, \ell})$.
\end{thm}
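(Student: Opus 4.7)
The plan is to follow the strategy of Hall \cite{HallBLMS}: combine the transvection produced by the toric-dimension-one reduction (implicit in the proof of Theorem \ref{tateToricOne}) with absolute irreducibility of the mod-$\ell$ representation, and then invoke the classical classification of irreducible subgroups of a finite symplectic group that contain a transvection. Since $\tilde{G}_{\ell,X,E}$ is a subgroup of $\tilde{G}_{\ell,X,K}$, it is enough to prove the conclusion for $E$, so henceforth assume $K=E$ is itself a discrete valuation field of residue characteristic different from $\ell$ carrying a semistable reduction of $X$ of toric dimension one.

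First I would extract a \emph{mod-$\ell$} transvection, refining the $\ell$-adic argument of Theorem \ref{tateToricOne}. That proof produced $\sigma\in\II$ for which $\rho_{\ell,X}(\sigma)-\I$ is of rank one on $V_\ell(X)$, squares to zero, and has image in the rank-one toric submodule $W\subseteq T_\ell(X)$. Grothendieck's monodromy formula shows that if $\sigma$ lies in the tame inertia with tame character value $t_\ell(\sigma)\in\Z_\ell^{*}$, then $(\sigma-\I)T_\ell(X)$ equals all of $W$. The induced operator $\bar{\rho}_{\ell,X}(\sigma)-\I$ on $X_\ell$ is therefore nonzero, nilpotent, and has a one-dimensional image, so $\bar{\rho}_{\ell,X}(\sigma)$ is a symplectic transvection in $\tilde{G}_{\ell,X,K}$; this works for every $\ell\ne\fchar(k)$.

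Next I need that the $\tilde{G}_{\ell,X,K}$-module $X_\ell$ is absolutely irreducible for all but finitely many $\ell$. The inputs are Theorem \ref{tateToricOne}, which gives $\g_{\ell,X}=\Q_\ell\I\oplus\sp(V_\ell(X),e_{\ell,\lambda})$, together with the Faltings-style absolute irreducibility of $V_\ell(X)$ already exploited above, and standard compatibility results (as in Serre's work on the image of Galois in compatible systems under the hypothesis $\End(X)=\Z$) that upgrade $\ell$-adic irreducibility to uniform mod-$\ell$ irreducibility for all sufficiently large $\ell$. This uniformity in $\ell$ is the main technical obstacle of the argument.

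Finally I would apply the Zalesskii--Serezhkin classification: an absolutely irreducible subgroup of $\Sp(X_\ell,\bar{e}_{\lambda,\ell})$ containing a symplectic transvection coincides with the whole $\Sp(X_\ell,\bar{e}_{\lambda,\ell})$ (for $\ell$ exceeding a small absolute bound depending only on $g$). The subgroup $H$ of $\tilde{G}_{\ell,X,K}\subseteq\Gp(X_\ell,\bar{e}_{\lambda,\ell})$ generated by symplectic transvections is normal in $\tilde{G}_{\ell,X,K}$ and lies in $\Sp(X_\ell,\bar{e}_{\lambda,\ell})$; by Clifford's theorem applied to the irreducible $\tilde{G}_{\ell,X,K}$-module $X_\ell$ together with a small-index argument (the similitude character sends $\tilde{G}_{\ell,X,K}/H$ into $\F_\ell^{*}$, so $H$-isotypic components have multiplicity dividing a divisor of $\ell-1$), $H$ acts irreducibly on $X_\ell$ for $\ell$ large enough, and then by the classification $H=\Sp(X_\ell,\bar{e}_{\lambda,\ell})$, which gives the desired containment.
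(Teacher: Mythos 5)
The paper does not actually prove this theorem: its ``proof'' consists of citing \cite{HallBLMS} for global fields and \cite{Gajda} for general finitely generated fields, noting only that those arguments rest on the Zalesskii--Serezhkin classification \cite{ZS}. Your sketch reconstructs exactly the strategy of those references (an inertial transvection coming from the toric-dimension-one semistable fibre, irreducibility mod $\ell$, then the classification of irreducible transvection groups), so at the level of strategy you and the cited sources agree. As a proof, however, the sketch has genuine gaps at precisely the two places where the real work happens.

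First, the uniform statement that $X_{\ell}$ is an absolutely irreducible $\tilde{G}_{\ell,X,K}$-module for all but finitely many $\ell$ does not follow from ``standard compatibility results'' or from the $\ell$-adic Lie algebra computation of Theorem \ref{tateToricOne}: knowing $\g_{\ell,X}$ for each $\ell$ separately gives no uniform control of the reductions mod $\ell$. The required input is the mod-$\ell$ isogeny/Tate theorem --- for $K$ finitely generated over its prime field with $\End(X)=\Z$, the Galois module $X_{\ell}$ is semisimple with commutant $\F_{\ell}$ for all but finitely many $\ell$ (Faltings in characteristic $0$, Zarhin in positive characteristic); this is exactly the ingredient isolated in \cite{HallBLMS} and \cite{Gajda}, and you flag it as ``the main technical obstacle'' without supplying it. Second, your argument that the normal subgroup $H$ generated by the transvections acts irreducibly is incorrect: the similitude character controls $\tilde{G}_{\ell,X,K}/(\tilde{G}_{\ell,X,K}\cap\Sp(X_{\ell},\bar{e}_{\lambda,\ell}))$, not $(\tilde{G}_{\ell,X,K}\cap\Sp(X_{\ell},\bar{e}_{\lambda,\ell}))/H$, so it bounds neither the number of $H$-isotypic components nor their multiplicities, and isotypicity would not yield irreducibility anyway. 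Ruling out reducibility and imprimitivity of $H$ is a separate argument in \cite{HallBLMS} (exploiting the shape of a transvection and a finiteness statement for bounded-degree extensions), and it is the part of \cite{Gajda} that requires new ideas beyond the number-field case. A minor further point: the inertial element reduces to a nontrivial transvection mod $\ell$ only when $\ell$ does not divide the integer attached to Grothendieck's monodromy pairing on the rank-one toric part, so your claim that ``this works for every $\ell\ne\fchar(k)$'' should read ``for all but finitely many $\ell$'' --- harmless for the conclusion, but not correct as stated.
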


\begin{proof}
 This is a result of \cite{HallBLMS} when $K$ is a global field. The general case was done in \cite{Gajda}. The proof   makes use of  the classification of  irreducible linear groups (over finite fields) generated by transvections \cite{ZS}.
\end{proof}

Let $K$ be a field that its finitely generated over its prime subfield. For each prime $\ell \ne \fchar(K)$ and positive integer $i$ we write
$K(\mu_{\ell^j})$ for the subfield of $\bar{K}$ obtained by adjoining to $K$ all $\ell^j$th roots of unity. It is well known that $K(\mu_{\ell^j})/K$ is an abelian
 field extension of degree dividing $(\ell-1)\ell^{j-1}$ and the cyclotomic character $\bar{\chi}_{\ell^j}$ factors through the embedding
$$\Gal(K(\mu_{\ell^j})/K) \hookrightarrow (\Z/\ell^j\Z)^{*}.$$

We will use the following elementary  statement that is well known but I did not find a suitable reference. (It  will be proven in Section \ref{cycloCHAR0}).

\begin{thm}
\label{cyclotomy}
 Let $K$ be a field that its finitely generated over its prime subfield.  Then for all but finitely many primes $\ell$  all the group embeddings
$\Gal(K(\mu_{\ell^j})/K) \hookrightarrow (\Z/\ell^j\Z)^{*}$ are isomorphisms.
\end{thm}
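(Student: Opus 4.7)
The plan is to reduce the statement to a classical ramification fact about cyclotomic extensions of $\Q$. Both the section label \texttt{cycloCHAR0} and the way the theorem is invoked in Theorem~\ref{mainENDO} indicate that the intended setting is $\fchar(K)=0$, i.e.\ $K$ is finitely generated over $\Q$; I will focus on this case.

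First I would translate the conclusion into an equality of subfields. Since $\Q(\mu_{\ell^j})/\Q$ is Galois with group $(\Z/\ell^j\Z)^{*}$, the restriction map
$$\Gal(K(\mu_{\ell^j})/K)\hookrightarrow \Gal(\Q(\mu_{\ell^j})/\Q)=(\Z/\ell^j\Z)^{*}$$
has image equal to the subgroup corresponding (via the Galois correspondence for $\Q(\mu_{\ell^j})/\Q$) to the intermediate field $K\cap \Q(\mu_{\ell^j})$. Hence this embedding is an isomorphism if and only if $K\cap \Q(\mu_{\ell^j})=\Q$, and the task reduces to establishing the latter equality for all $j\ge 1$ and all but finitely many primes $\ell$.

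Next I would replace $K$ by its relative algebraic closure $K_0:=K\cap \bar{\Q}$. A standard transcendence-degree argument --- the degree over $\Q$ of any element of $K$ that is algebraic over $\Q$ is bounded by $[K:\Q(y_1,\dots,y_d)]$ for a transcendence basis $y_1,\dots,y_d$ of $K/\Q$ --- shows that $K_0/\Q$ is finite, i.e.\ $K_0$ is a number field. Because $\Q(\mu_{\ell^j})\subset \bar{\Q}$, we have $K\cap \Q(\mu_{\ell^j})=K_0\cap \Q(\mu_{\ell^j})$, so the problem is reduced to a question about the number field $K_0$.

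The key step is then a ramification argument. Let $S$ be the finite set of rational primes that ramify in $K_0/\Q$. For $\ell\notin S$, set $F:=K_0\cap \Q(\mu_{\ell^j})$. On one hand $F\subseteq K_0$, so $F/\Q$ is unramified at $\ell$. On the other hand $F$ is a subextension of $\Q(\mu_{\ell^j})/\Q$; since $\ell$ is totally ramified in $\Q(\mu_{\ell^j})/\Q$ with ramification index $\phi(\ell^j)=[\Q(\mu_{\ell^j}):\Q]$, a degree/ramification comparison (using $e(\Q(\mu_{\ell^j})/\Q)=e(\Q(\mu_{\ell^j})/F)\cdot e(F/\Q)$) forces every subextension of $\Q(\mu_{\ell^j})/\Q$ to be totally ramified at $\ell$ as well. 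A nontrivial extension of $\Q$ cannot be simultaneously unramified and totally ramified at $\ell$, so $F=\Q$, as required. The only mildly nontrivial point I anticipate is the verification that $K_0$ is a number field; once that is in place, the rest is classical cyclotomic ramification theory.
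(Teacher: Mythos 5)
Your proposal is correct and follows essentially the same route as the paper: both reduce to the relative algebraic closure $K\cap\bar{\Q}$ of $\Q$ in $K$ (a number field, which is the paper's Lemma~\ref{const}) and then exploit that $\ell$ is totally ramified in $\Q(\mu_{\ell^j})/\Q$ while unramified in that number field for all but finitely many $\ell$. The only cosmetic difference is the descent back to $K$: you use the compositum Galois correspondence together with $K\cap\Q(\mu_{\ell^j})=K_0\cap\Q(\mu_{\ell^j})$, whereas the paper invokes linear disjointness via the tensor product $K\otimes_k k(\mu_{\ell^j})$ being a field (its Lemma~\ref{tensor}); and your explicit restriction to $\fchar(K)=0$ matches what the paper's own proof actually covers.
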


\begin{cor}[Corollary to Theorem \ref{tateToricOne}]
 \label{corToricOne}
We keep the notation and assumptions of Theorem \ref{tateToricOne}. Then for all but finitely many primes $\ell$ the group
$G_{\ell,X,K}$ contains $\Sp(T_{\ell}(X),e_{\ell,\lambda})$. If, in addition, $\fchar(K)=0$ then
for all but finitely many primes $\ell$ the group
$G_{\ell,X,K}=\Gp(T_{\ell}(X),e_{\ell,\lambda})$.
\end{cor}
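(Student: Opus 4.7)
\textbf{Proof proposal for Corollary \ref{corToricOne}.} The plan is to combine Theorem \ref{tateToricOne}, Theorem \ref{hallGajda}, and Theorem \ref{cyclotomy} with a standard ``lifting from $\F_{\ell}$ to $\Z_{\ell}$'' argument for subgroups of symplectic groups.

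First, fix a prime $\ell \ne \fchar(K)$ and consider the closed subgroup
$$H_{\ell} \ := \ G_{\ell,X,K}\cap \Sp(T_{\ell}(X),e_{\ell,\lambda})\ \subset\ \Sp(T_{\ell}(X),e_{\ell,\lambda}).$$
Theorem \ref{hallGajda} asserts that for all but finitely many primes $\ell$ the image of $G_{\ell,X,K}$ modulo $\ell$ contains $\Sp(X_{\ell},\bar{e}_{\lambda,\ell})$. Since the reduction map
$\Gp(T_{\ell}(X),e_{\ell,\lambda})\to \Gp(X_{\ell},\bar{e}_{\lambda,\ell})$ sends $\Sp(T_{\ell}(X),e_{\ell,\lambda})$ onto $\Sp(X_{\ell},\bar{e}_{\lambda,\ell})$, the image of $H_{\ell}$ modulo $\ell$ equals $\Sp(X_{\ell},\bar{e}_{\lambda,\ell})$.

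Next, I would invoke the standard fact that for all sufficiently large $\ell$ (certainly $\ell\geq 5$), any closed subgroup of $\Sp(T_{\ell}(X),e_{\ell,\lambda})$ whose image in $\Sp(X_{\ell},\bar{e}_{\lambda,\ell})$ is everything must coincide with $\Sp(T_{\ell}(X),e_{\ell,\lambda})$. The point is that the congruence filtration of $\Sp(T_{\ell}(X),e_{\ell,\lambda})$ has graded pieces isomorphic (as $\Sp(X_{\ell},\bar{e}_{\lambda,\ell})$-modules, via the adjoint action) to the simple Lie algebra $\sp(X_{\ell},\bar{e}_{\lambda,\ell})$ over $\F_{\ell}$; since $\Sp(X_{\ell},\bar{e}_{\lambda,\ell})$ is perfect for $\ell\ge 5$, successive application of this irreducibility forces $H_{\ell}$ to meet every step of the filtration in the full subgroup. (This is the lemma underlying, e.g., Serre's treatment of abelian $\ell$-adic representations.) Therefore $H_{\ell}=\Sp(T_{\ell}(X),e_{\ell,\lambda})$, which gives the first assertion
$$G_{\ell,X,K}\ \supseteq \ \Sp(T_{\ell}(X),e_{\ell,\lambda}).$$

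For the characteristic zero assertion, consider the symplectic multiplier character
$$c_{\ell}:\Gp(T_{\ell}(X),e_{\ell,\lambda})\ \twoheadrightarrow\ \Z_{\ell}^{*},\qquad \Gp/\Sp\ \cong\ \Z_{\ell}^{*}.$$
By the standard identification using the Weil pairing, the composition $c_{\ell}\circ\rho_{\ell,X}$ coincides with the cyclotomic character $\chi_{\ell}:\Gal(K)\to \Z_{\ell}^{*}$. Theorem \ref{cyclotomy} shows that, for all but finitely many $\ell$, the cyclotomic character surjects onto $\Z_{\ell}^{*}$, so $c_{\ell}(G_{\ell,X,K})=\Z_{\ell}^{*}$. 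Combined with the inclusion $G_{\ell,X,K}\supseteq \Sp(T_{\ell}(X),e_{\ell,\lambda})$ established above, this forces $G_{\ell,X,K}=\Gp(T_{\ell}(X),e_{\ell,\lambda})$ for almost all $\ell$, as claimed.

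The main obstacle is the $\F_{\ell}$-to-$\Z_{\ell}$ lifting step: one must verify that the hypotheses ($\ell\geq 5$, surjectivity on the residue level) really do suffice to rule out proper closed subgroups of $\Sp(T_{\ell}(X),e_{\ell,\lambda})$, and handle the finitely many small primes by excluding them. Everything else is just assembling results already proved in the paper.
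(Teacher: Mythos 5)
Your overall architecture matches the paper's: use Theorem \ref{hallGajda} for the residual statement, lift from $\F_{\ell}$ to $\Z_{\ell}$ via the Serre--Vasiu result on closed subgroups of $\Sp_{2g}(\Z_{\ell})$ surjecting mod $\ell$, and then use Theorem \ref{cyclotomy} and the identification of the symplectic multiplier of $\rho_{\ell,X}$ with the cyclotomic character to upgrade $\Sp$ to $\Gp$ in characteristic zero. The characteristic-zero half and the lifting lemma are fine as you state them (modulo also discarding the finitely many $\ell$ dividing $\deg(\lambda)$, so that $\bar{e}_{\lambda,\ell}$ is nondegenerate).

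There is, however, a genuine gap in the step where you claim that the image of $H_{\ell}=G_{\ell,X,K}\cap \Sp(T_{\ell}(X),e_{\ell,\lambda})$ modulo $\ell$ equals $\Sp(X_{\ell},\bar{e}_{\lambda,\ell})$. Your justification --- that $\Sp(T_{\ell}(X),e_{\ell,\lambda})$ surjects onto $\Sp(X_{\ell},\bar{e}_{\lambda,\ell})$ --- does not give this: an element $g\in G_{\ell,X,K}$ whose reduction lands in $\Sp(X_{\ell},\bar{e}_{\lambda,\ell})$ has symplectic multiplier $c(g)\equiv 1 \pmod{\ell}$ but need not have $c(g)=1$, so $g$ need not lie in $H_{\ell}$, and a priori the reduction of $G_{\ell,X,K}\cap\Sp$ could be strictly smaller than $\tilde{G}_{\ell,X,K}\cap\Sp(X_{\ell},\bar{e}_{\lambda,\ell})$. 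The missing ingredient is exactly the one the paper uses: replace the intersection by the closure $G$ of the derived subgroup $[G_{\ell,X,K},G_{\ell,X,K}]$. This automatically lies in $\Sp(T_{\ell}(X),e_{\ell,\lambda})$ (the multiplier is a homomorphism to the abelian group $\Z_{\ell}^{*}$), it is contained in $G_{\ell,X,K}$ because $G_{\ell,X,K}$ is closed (here Theorem \ref{tateToricOne} is used: $G_{\ell,X,K}$ is open of finite index, hence closed, in $\Gp(T_{\ell}(X),e_{\ell,\lambda})$), and its reduction contains $[\Sp(X_{\ell},\bar{e}_{\lambda,\ell}),\Sp(X_{\ell},\bar{e}_{\lambda,\ell})]=\Sp(X_{\ell},\bar{e}_{\lambda,\ell})$ by perfectness for $\ell\ge 5$. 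With that substitution (or by inserting the same commutator argument to show that your $H_{\ell}$ does surject mod $\ell$), your proof goes through and coincides with the paper's.
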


\begin{proof}[Proof of Corollary \ref{corToricOne}]
 Let us assume that a prime $\ell \ge 5$ and $\deg(\lambda)$ is not divisible by $\ell$.
In particular, $\bar{e}_{\lambda,\ell}$ is nondegenerate and the finite group $\Sp(X_{\ell},\bar{e}_{\lambda, \ell})$ is perfect, i.e., coincides with its own derived
subgroup $[\Sp(X_{\ell},\bar{e}_{\lambda, \ell}),\ \Sp(X_{\ell},\bar{e}_{\lambda, \ell})]$.  Using Theorem \ref{hallGajda}, we may and will assume (after removing finitely many primes)
that $\tilde{G}_{\ell,X,K}$ contains $\Sp(X_{\ell},\bar{e}_{\lambda, \ell})$.

Recall that $G_{\ell,X.K}$ is an open subgroup of finite index in  $\Gp(T_{\ell}(X),e_{\ell,\lambda})$ and therefore is a {\sl closed} subgroup in  $\Gp(T_{\ell}(X),e_{\ell,\lambda})$.
Following Serre \cite{SerreV}, let us consider the closure $G$ of the derived subgroup $[G_{\ell,X,K}, \ G_{\ell,X,K}]$ of $G_{\ell,X,K}$ in
$\Gp(T_{\ell}(X),e_{\ell,\lambda})$. Since $G_{\ell,X,K}$ is closed  in  $\Gp(T_{\ell}(X),e_{\ell,\lambda})$, the group $G$ is a subgroup of $G_{\ell,X,K}$. Clearly, $G$ is also a closed subgroup of $\Sp(T_{\ell}(X),e_{\ell,\lambda})$ that maps surjectively on
$$[\Sp(X_{\ell},\bar{e}_{\lambda, \ell}),\ \Sp(X_{\ell},\bar{e}_{\lambda, \ell})]=\Sp(X_{\ell},\bar{e}_{\lambda, \ell}).$$
It follows from a theorem of Serre \cite{SerreV} (see also \cite[Th. 1.3]{Vasiu}) that $G=\Sp(T_{\ell}(X),e_{\ell,\lambda})$. We conclude that $\Sp(T_{\ell}(X),e_{\ell,\lambda}) \subset G_{\ell,X,K}$. This proves the first assertion.

Now, assume additionally that $\fchar(K)=0$.  It follows from Theorem \ref{cyclotomy} that for all but finitely many primes $\ell$ the cyclotomic character
$\chi_{\ell}: \Gal(K) \to \Z_{\ell}^{*}$ is surjective.  This implies that the homomorphism
$$G_{\ell,X,K}\to\Gp(T_{\ell}(X),e_{\ell,\lambda})/\Sp(T_{\ell}(X),e_{\ell,\lambda})=\Z_{\ell}^{*}$$
is also surjective for all but finitely many primes $\ell$. In order to finish the proof, one has only to recall that we just proved that
$\Sp(T_{\ell}(X),e_{\ell,\lambda}) \subset G_{\ell,X,K}$ for all but finitely many primes $\ell$.

\end{proof}

\begin{rem}
It follows from Theorem \ref{unboundK} below that when $\fchar(K)=p>0$ then the index  of the image $\bar{\chi}_{\ell}(\Gal(K))$  in $(\Z/\ell\Z)^{*}$ is an unbounded function in $\ell$.  It follows that the function that assigns to a prime $\ell \ne p$ the index of $\tilde{G}_{\ell,X,K}$ in $\Gp(X_{\ell},\bar{e}_{\lambda, \ell})$ is also unbounded. This, in turn,  implies the unboundness of the function that that assigns to a prime $\ell \ne p$ the index of $G_{\ell,X,K}$ in $\Gp(T_{\ell}(X),e_{\lambda, \ell})$.
\end{rem}

\begin{rem}
It was stated without a proof in \cite[Remark on p. 707]{HallBLMS} that if $K$ is a global field of characteristic $p>0$
then $\tilde{G}_{\ell,X,K}$ does {\sl not} coincide with $\Gp(X_{\ell},\bar{e}_{\lambda, \ell})$ for {\sl infinitely} many primes $\ell$.
\end{rem}

\begin{rems}
\label{TateHodge}

\begin{itemize}
\item[(i)]
Recall (see the proof of Theorem \ref{tateToricOne}) that if a prime $\ell \ne \fchar(k)$ then
$$\g_{\ell,X}=\Q_{\ell}\I\oplus \sp(V_{\ell}(X),e_{\ell,\lambda}).$$
I claim that this equality (and therefore the conclusion of Theorem \ref{tateToricOne})  hold for all $\ell \ne \fchar(K)$. Clearly, the only  remaining case is
$$\fchar(K)=0, \fchar(k)=p>0, \ell=p.$$
In order to do that, let us choose a {\sl prime} $q \ne p$. We know that
$$\g_{q,X}=\Q_{q}\I\oplus \sp(V_{q}(X),e_{q,\lambda}).$$
Recall that for {\sl all} primes $\ell$
$$\g_{\ell,X}=\Q_{\ell}\I\oplus\g_{\ell}^{\ss}\subset \Q_{\ell}\I\oplus \sp(V_{\ell}(X),e_{\ell,\lambda}),$$
where $\g_{\ell}^{\ss}$ is an absolutely irreducible semisimple $\Q_{\ell}$-Lie algebra.
Now the same arguments as in  \cite[Lemma 8.2 and its Proof on pp. 426--427]{ZarhinMMJ}  prove that
$$\g_{p,X}=\Q_{p}\I\oplus \sp(V_{p}(X),e_{p,\lambda})$$
provided we replace  all $2$ by $q$ and all  $\ell$ by $p$.

\item[(ii)]

The same arguments from invariant theory \cite{Howe} as in \cite[Sect. 9]{ZarhinPLMS2} prove that for every finite algebraic field extension $K^{\prime}/K$  and each
self-product $X^m$ of $X$ every $\ell$-adic Tate class on $X^m$ can be presented as a linear combination of products of divisor classes on $X^m$. In particular, the Tate conjecture
 holds true for all $X^m$ in all codimensions. (In codimension one the Tate conjecture \cite{Tate} for abelian varieties was proven by Tate himself over finite fields \cite{TateInv},
by the author \cite{ZarhinMZ1} in characteristic
$>2$, by Faltings \cite{F1,F2} in characteristic $0$, and by S. Mori \cite{MB} in characteristic $2$ respectively.)

Assume additionally that $\fchar(K)=0$ and therefore $K$ is finitely generated over $\Q$, and fix an embedding $\bar{K}\subset \C$. Then the same arguments as in \cite[Sect.  10]{ZarhinMMJ} and
 \cite[Sect. 10]{ZarhinPLMS2}  (based on a theorem of Pijatetskij-Shapiro, Deligne and Borovoi \cite{Deligne,SerreKyoto}) prove that for each
self-product $X^m$ of $X$ every Hodge class on $X^m$ can be presented as a linear combination of products of divisor classes on $X^m$. In particular, the Hodge conjecture
 holds true for all $X^m$ in all codimensions. In addition, the Mumford-Tate conjecture holds true for $X$. (See also \cite{ZarhinJussieu}.)
\end{itemize}
\end{rems}

\section{Points of order 2}
\label{hyper2}
\begin{sect}
\label{heart} Let $K$ be a field of characteristic different from $2$, let
$f(x)\in K[x]$ be a polynomial of {\sl odd} degree $n\ge 5$ and without
multiple roots. Let $C_f$ be the hyperelliptic curve $y^2=f(x)$ and $J(C_f)$
the jacobian of $C_f$.
  The Galois module $J(C_f)_2$ of points of order $2$ admits the following
  description.

  Let $\F_2^{\RR_f}$ be the $n$-dimensional $\F_2$-vector space of functions $\varphi: \RR_f \to \F_2$
   provided with the
   natural structure of $\Gal(f)\subset \Perm(\RR_f)$-module. The canonical surjection
    $$\Gal(K)\twoheadrightarrow \Gal(K(\RR_f)/K)=\Gal(f)$$
    provides $\F_2^{\RR_f}$ with the structure of $\Gal(K)$-module. Let us
    consider the hyperplane
    $$(\F_2^{\RR_f})^0:=\{\varphi:\RR_f \to \F_2\mid \sum_{\alpha\in \RR_f}\varphi(\alpha)=0\}\subset \F_2^{\RR_f}.$$
    Clearly, $(\F_2^{\RR_f})^0$ is a Galois submodule in $\F_2^{\RR_f}$.

    It is well known (see, for instance, \cite{ZarhinTexel}) that if $n$ is odd
    then the Galois modules $J(C_f)_2$ and
$(\F_2^{\RR_f})^0$ are isomorphic. It follows that if $X=J(C_f)$ then
$\tilde{G}_{2,X,K}=\Gal(f)$ and $K(J(C_f)_2)=K(\RR_f)$.
\end{sect}

\begin{lem}
\label{order2} Suppose that $n=\deg(f)$ is odd and $f(x)=(x-t)h(x)$ with $t\in
K$ and $h(x)\in K[x]$. Then $\tilde{G}_{2,J(C_f),K}\cong \Gal(h)$ and the
Galois modules $J(C_f)_2$ and $\F_2^{\RR_h}$ are isomorphic.
\end{lem}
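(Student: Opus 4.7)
The plan is to leverage the description in \S\ref{heart}: since $n=\deg(f)$ is odd and $f$ has no multiple roots, we already have a Galois-equivariant isomorphism $J(C_f)_2 \cong (\F_2^{\RR_f})^0$. So the real task is to identify the hyperplane $(\F_2^{\RR_f})^0$ with $\F_2^{\RR_h}$ as Galois modules and then read off what this implies for $\tilde{G}_{2,J(C_f),K}$.

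First, because $f$ has no multiple roots and $t \in K$ is a root of $f$ but the factor $h(x)$ has degree $n-1$, the set $\RR_h$ does not contain $t$, so $\RR_f = \{t\} \sqcup \RR_h$. Since $t \in K$, the absolute Galois group $\Gal(K)$ fixes $t$ and hence preserves the subset $\RR_h \subset \RR_f$. I would then consider the restriction map
$$\mathrm{res}:(\F_2^{\RR_f})^0 \to \F_2^{\RR_h}, \quad \varphi \mapsto \varphi|_{\RR_h}.$$
The key point is that a function $\varphi \in \F_2^{\RR_f}$ lies in $(\F_2^{\RR_f})^0$ iff $\varphi(t) = \sum_{\alpha \in \RR_h}\varphi(\alpha)$, so $\varphi$ is uniquely and freely determined by its restriction to $\RR_h$. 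Hence $\mathrm{res}$ is an $\F_2$-linear isomorphism. Since $\Gal(K)$ acts on $\F_2^{\RR_f}$ by permuting the arguments and preserves $\RR_h$, the map $\mathrm{res}$ is Galois-equivariant. Combining with the isomorphism of \S\ref{heart} yields a Galois-module isomorphism $J(C_f)_2 \cong \F_2^{\RR_h}$.

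For the second assertion, I would observe that the Galois action on $\F_2^{\RR_h}$ factors through the quotient $\Gal(K) \twoheadrightarrow \Gal(K(\RR_h)/K) = \Gal(h) \subset \Perm(\RR_h)$, since $h$ has simple roots (being a divisor of the separable polynomial $f$). Moreover, the permutation representation $\Perm(\RR_h) \hookrightarrow \Aut_{\F_2}(\F_2^{\RR_h})$ is faithful (distinct permutations act differently on the basis of characteristic functions), so $\Gal(h) \hookrightarrow \Aut_{\F_2}(\F_2^{\RR_h})$. Transporting this via the isomorphism $J(C_f)_2 \cong \F_2^{\RR_h}$, the image $\tilde{G}_{2,J(C_f),K}$ of $\Gal(K)$ in $\Aut_{\F_2}(J(C_f)_2)$ is exactly $\Gal(h)$.

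I do not expect any serious obstacle: the only care required is verifying that the restriction map is both bijective and Galois-equivariant, and that $t \in K$ so that $\RR_h$ is preserved by $\Gal(K)$. The absence of multiple roots in $f$ guarantees $t \notin \RR_h$, which is what makes the ``hyperplane = permutation module'' identification clean.
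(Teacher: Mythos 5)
Your proof is correct and complete. The paper itself gives no argument here, simply citing Lemma 5.1 of \cite{ZarhinPLMS2}; your reasoning --- that $\RR_f=\{t\}\sqcup\RR_h$ with $t$ Galois-fixed, so that restriction of functions gives a Galois-equivariant isomorphism $(\F_2^{\RR_f})^0\to\F_2^{\RR_h}$, combined with the identification $J(C_f)_2\cong(\F_2^{\RR_f})^0$ from \S\ref{heart} and the faithfulness of the permutation representation of $\Gal(h)$ on $\F_2^{\RR_h}$ --- is exactly the standard argument behind that cited lemma, and you correctly verify the needed hypotheses ($t\notin\RR_h$, separability of $h$).
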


\begin{proof}
This is Lemma 5.1 of \cite{ZarhinPLMS2}.
\end{proof}

\begin{cor}
\label{helpcor} Suppose that $n=\deg(f)=2g+1$ is odd  and
$f(x)=(x-t)h(x)$ with $t\in K$ and $h(x)\in K[x]$. Assume also that
$\Gal(h)=\Alt(\RR_h)\cong \A_{2g}$.

Assume additionally that there exists a finite algebraic  field extension $E/K$ such that
$E$ is a discrete valuation field with discrete valuation ring $R$ and residue field $k$ such that the N\'eron model of $J(C_f)$
over $\Spec(R)$ is a semiabelian group scheme, whose closed
fiber has toric dimension $1$.

Then $\End(J(C_f))=\Z$.
\end{cor}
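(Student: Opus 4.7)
The plan is to reduce the statement directly to Corollary~\ref{mainAVcor} by computing the 2-torsion Galois module of $J(C_f)$, and then invoking the semistable reduction hypothesis.

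First, I would apply Lemma~\ref{order2} to the factorization $f(x)=(x-t)h(x)$. Since $\deg(f)=2g+1$ is odd, the lemma gives a Galois-equivariant isomorphism $J(C_f)_2\cong \F_2^{\RR_h}$ and identifies $\tilde{G}_{2,J(C_f),K}$ with $\Gal(h)$. Because $f$ has no multiple roots and $(x-t)$ is coprime to $h$, the polynomial $h$ is separable of degree $2g$, so $\RR_h$ is a $2g$-element set.

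Second, the hypothesis $\Gal(h)=\Alt(\RR_h)\cong \A_{2g}$ means that, setting $B:=\RR_h$, we have a group isomorphism $\tilde{G}_{2,J(C_f),K}\cong \Alt(B)$ under which the Galois module $J(C_f)_2$ corresponds to $\F_2^B$. Hence $X:=J(C_f)$ is a $g$-dimensional abelian variety over $K$ whose 2-torsion satisfies exactly the hypothesis of Corollary~\ref{mainAVcor} (with $F=K$ and the given set $B$).

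Finally, the additional assumption in the statement—that there is a finite algebraic extension $E/K$ such that the N\'eron model of $J(C_f)$ over the valuation ring $R\subset E$ is semiabelian with closed fiber of toric dimension one—is precisely the remaining hypothesis required by Corollary~\ref{mainAVcor}. Applying that corollary yields $\End(J(C_f))=\Z$, as required.

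There is essentially no obstacle: the content is a formal combination of the 2-torsion computation of Section~\ref{hyper2} (namely Lemma~\ref{order2}) with the endomorphism-vanishing criterion of Section~\ref{AV} (namely Corollary~\ref{mainAVcor}, whose proof rests on Theorem~\ref{mainAV} and Theorem~\ref{toricONE}). The only tacit requirement is $g\ge 3$, inherited from Corollary~\ref{mainAVcor}, which ensures that $\A_{2g}$ is simple nonabelian and that Lemma~\ref{AA8} applies to the permutation module $\F_2^B$.
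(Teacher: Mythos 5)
Your proof is correct and is exactly the argument the paper gives: identify the Galois module $J(C_f)_2$ with $\F_2^{\RR_h}$ via Lemma~\ref{order2}, set $B=\RR_h$, and apply Corollary~\ref{mainAVcor}. Your remark that $g\ge 3$ is tacitly required (so that Theorem~\ref{mainAV} and Lemma~\ref{AA8} apply) is a fair observation, but it does not change the fact that the route is the same as the paper's.
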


\begin{proof}[Proof of Corollary \ref{helpcor}]
Let us put $K=F$, $X=J(C_f)$ and $B=\RR_h$.
 Then assertion  is an immediate corollary of Lemma \ref{order2}
and Corollary
\ref{mainAVcor}.
\end{proof}

\begin{thm}
\label{twodim}
Suppose that $n=2g+2=\deg(f)\ge 8$ is even and
$f(x)=(x-t_1)(x-t_2)u(x)$ with
$$t_1, t_2 \in K, \ t_1 \ne t_2, \ u(x)\in K[x], \ \deg(u)=n-2.$$
Suppose that $\Gal(u)=\ST_{2g}$ or $\A_{2g}$.
Assume additionally that there exists a finite algebraic  field extension $E/K$ such that
$E$ is a discrete valuation field with discrete valuation ring $R$ and residue field $k$ such that the N\'eron model of $J(C_f)$
over $\Spec(R)$ is a semiabelian group scheme, whose closed
fiber has toric dimension $1$.
Then $\End(J(C_f))=\Z$.
\end{thm}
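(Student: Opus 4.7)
The plan is to reduce the even-degree situation to the odd-degree situation already handled by Corollary \ref{helpcor}, via the standard Möbius substitution that moves the Weierstrass point $t_1$ to infinity.

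First I would normalize the Galois group. Since $\End(J(C_f))$ is computed over $\bar{K}$, it is unchanged after replacing $K$ by any finite algebraic extension. If $\Gal(u)=\ST_{2g}$, I pass to the quadratic extension $K'=K(\sqrt{\disc(u)})$, over which the Galois group of $u$ becomes $\A_{2g}$. The assumption that there exists a finite extension $E/K$ that is a discrete valuation field with semiabelian N\'eron model of toric dimension one is preserved after replacing $E$ by the compositum $EK'$ (choose any extension of the discrete valuation; semistability and toric dimension one persist under finite base change of the discrete valuation ring). Hence I may assume from the outset that $\Gal(u)=\A_{2g}$.

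The main step is the substitution $x=t_1+1/x'$, $y=(y')/(x')^{g+1}$, which defines a birational equivalence between $C_f:y^2=(x-t_1)(x-t_2)u(x)$ and a new hyperelliptic curve $C_F:(y')^2=F(x')$ with
\[
F(x')=\bigl((t_1-t_2)x'+1\bigr)\cdot a_{2g}\prod_{i=1}^{2g}\bigl((t_1-\alpha_i)x'+1\bigr),
\]
where $\alpha_1,\dots,\alpha_{2g}\in\bar K$ are the roots of $u(x)$. Since $f$ has no multiple roots (so $t_1\ne\alpha_i$ for all $i$) and $t_1\ne t_2$, the polynomial $F$ has degree $2g+1$ and no multiple roots. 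Writing $F(x')=(x'-t')\,h'(x')$ with $t'=1/(t_2-t_1)$, the roots of $h'$ are $\beta_i=1/(\alpha_i-t_1)$. The action of $\sigma\in\Gal(K)$ sends $\beta_i\mapsto 1/(\sigma(\alpha_i)-t_1)$, so the permutation of $\RR_{h'}$ induced by $\sigma$ coincides with the permutation it induces on $\RR_u$ under the bijection $\alpha_i\leftrightarrow\beta_i$. Consequently $\Gal(h')=\Gal(u)=\A_{2g}=\Alt(\RR_{h'})$.

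Finally, the birational map extends to an isomorphism of smooth projective models, hence $J(C_f)\cong J(C_F)$ as abelian varieties over $K$. In particular, the hypothesis on the existence of a finite extension $E/K$ with semiabelian N\'eron model of toric dimension one for $J(C_f)$ transfers verbatim to $J(C_F)$. The hypotheses of Corollary \ref{helpcor} are therefore met for $C_F$, and we conclude $\End(J(C_f))=\End(J(C_F))=\Z$.

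The only real subtlety (and the place I would be most careful) is Step 1: verifying that the semiabelian reduction with toric dimension one descends along the quadratic extension $K'/K$. Since semistability is geometric and the toric dimension is an invariant of the identity component of the closed fiber that does not change under finite base change of discrete valuation rings, this is routine but worth stating explicitly. The substitution itself and the identification of $\Gal(h')$ with $\Gal(u)$ are then straightforward.
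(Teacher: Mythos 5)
Your proof is correct and follows essentially the same route as the paper: after passing to a quadratic extension to make $\Gal(u)=\A_{2g}$, the paper applies the same M\"obius substitution $x\mapsto 1/(x-t_1)$ (written there as a shift $h(x+t_1)$ followed by reversal) to produce an odd-degree model $(x-\frac{1}{t_2-t_1})v(x)$ with $\Gal(v)=\Gal(u)$, and then invokes Corollary \ref{helpcor}. Your additional remark on why the semiabelian-reduction hypothesis persists under the quadratic base change is a point the paper leaves implicit, but it is not a divergence in method.
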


\begin{proof}
Replacing if necessary, $K$ by its suitable quadratic extension, we may and will assume that $\Gal(u)=\A_{2g}$.
Let us put $h(x)=(x-t_2)u(x)$. We have $f(x)=(x-t_1)h(x)$. Let us consider the degree $(n-1)$ polynomials

$$h_1(x)=h(x+t_1)=(x+t_1-t_2)u(x+t_1), \ h_2(x)=x^{n-1} h_1(1/x) \in K[x].$$
 We have
$$\RR_{h_1}=\{\alpha-t_1\mid \alpha \in \RR_{h}\}=
\{\alpha-t_1+t_2\mid \alpha \in \RR_{u}\} \bigcup \left\{t_2-t_1\right\},$$
$$\RR_{h_2}=\left\{\frac{1}{\alpha-t_1}\mid \alpha \in \RR_u\right\}\bigcup
\left\{\frac{1}{t_2-t_1}\right\}.$$ This implies that
$$K(\RR_{h_2})=K(\RR_{h_1})=K(\RR_{u})$$
and
$$h_2(x)=\left(x-\frac{1}{t_2-t_1}\right)v(x)$$
where $v(x)\in K[x]$ is a degree $(n-2)$ polynomial with
$K(\RR_{v})=K(\RR_{u})$; in particular, $\Gal(v)=\Gal(u)=\ST_{n-2}$ or
$\A_{n-2}$. Again,  the standard substitution
$$x_1=1/(x-t_1), \ y_1=y/(x-t_1)^{g+1}$$
establishes a birational $K$-isomorphism between $C_f$ and a hyperelliptic
curve
$$C_{h_2}: y_1^2=h_2(x_1).$$
Now the result follows from Corollary \ref{helpcor} applied to
$h_2(x_1)$.
\end{proof}

\section{Proof of main results}
\label{monodromy}

We keep the notation and assumptions of Theorem \ref{mainENDO}.  In addition. let us put
$$X=J(C_f), \ S= \Spec(R).$$
Let us start to prove it. First, notice that
the equation $y^2=f(x)$ defines a (semi)stable   genus $g$ curve over $R$, whose  generic fiber is smooth while its closed fiber is an irreducible reduced curve with one double point. More precisely, there is a semistable projective (flat) $R$-curve
$$\Cc:= \Proj\ R[\XX,\YY,\ZZ]/(F(\XX,\YY, \ZZ)) \to \Spec(R)=S$$
where
\begin{multline*}
F(\XX,\YY,\ZZ)=(\ZZ^{g+1})^2 [(\YY/\ZZ)^{g+1})^2-f(\XX/\ZZ)]=\\
\YY^2-(\XX-t_1\ZZ)(\XX-t_2\ZZ) \ZZ^{2g}u(\XX/\ZZ)=\\
\YY^2-(\XX-t_1\ZZ)(\XX-t_2\ZZ)\sum_{i=0}^{2g} a_i\XX^i \ZZ^{2g-i} \in R[\XX,\YY,\ZZ],\\
\deg(\XX)=\deg(\ZZ)=1, \deg(\YY)=g+1.
\end{multline*}
The principal open (affine) subset $D_{+}(\ZZ)$ of $\Cc$ is
$$\Spec\ R[\XX/\ZZ, \YY/\ZZ^{g+1}]=\Spec\ R[x,y]/(y^2-f(x))$$
with $x=\XX/\ZZ, y=\YY/\ZZ^{g+1}$.
The generic fiber of $\Cc$ coincides with the hyperelliptic curve $C_f/K$. Its closed fiber $\Cc_k$ is a singular (reduced) absolutely irreducible curve over the residue field $k$,
whose only singularity is an ordinary double point $(\bar{\beta}: 0:1)$ where
$$\bar{\beta}: =t_1 \bmod m =t_2 \bmod m \in R/\m =k.$$
The normalization of $\Cc_k$ is a (smooth projective) hyperelliptic curve of genus $g-1$ over $k$.
This implies (see \cite[Ch. 9, Example 8 on p. 246]{Neron}) that $\Pic_{\Cc_k/k}^0$ is a (connected)  commutative algebraic $k$-group that is an extension of a $(g-1)$-dimensional abelian variety by a one-dimensional torus. On the other hand, $\Pic_{\Cc/S}^0$ is a quasi-projective smooth separated $S$-group  scheme \cite[Th. 1 on p. 252]{Neron}, whose closed fiber coincides with
$\Pic_{\Cc_k/k}^0$ while the generic fiber is $J(C_f)=X$. In particular,  $\Pic_{\Cc/S}^0$ is a semiabelian scheme,
 whose closed fiber has toric dimension one. Now let $\X \to S$ be the N\'eron model of $X$ with closed fiber $\X_k$. The generic fibers of both $\X$ and
$\Pic_{\Cc/S}^0$  coincide with $X=J(C_f)$.  Since $\Pic_{\Cc/S}^0$ is  semiabelian, it follows from \cite[Ch. 7, Prop. 3 on p. 182]{Neron} that the identity
components of the closed fibers of $\X$ and $\Pic_{\Cc/S}^0$ are isomorphic.
This implies that (connected) $\Pic_{\Cc_k/k}^0$ is isomorphic to the identity component  $\X_k^0$ of $\X_k$;  in particular,
${\X_k}^0$ has toric dimension one.
Now Theorem \ref{twodim} tells us that $\End(J(C_f))=\End(X)=\Z$. This proves the first assertion of Theorem \ref{mainENDO}.
The second assertion follows from Theorem \ref{tateToricOne} combined with Remark \ref{TateHodge}(i) while the third one follows from Corollary \ref{corToricOne}.
% applied to $X=J(C_f)$.

\section{Cyclotomic extensions}
\label{cycloCHAR0}

Throughout this section, $k$ is a field and $K \supset k$ its overfield that is finitely generated over $k$.

It seems that the following two lemmas are well known but I did not find a suitable reference.

\begin{lem}
 \label{const}
Let $k^{\prime}$ be the algebraic closure of $k$ in $K$. Then $[k^{\prime}:k]<\infty$, i.e., the field $k^{\prime}$ is a finite algebraic extension of $k$.
\end{lem}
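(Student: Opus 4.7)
The plan is to reduce the question to a standard linear disjointness computation after picking a transcendence basis. Since $K/k$ is finitely generated, I choose a transcendence basis $x_1,\dots,x_n$ of $K$ over $k$, so that $K$ is a finite algebraic extension of the purely transcendental subfield $F:=k(x_1,\dots,x_n)$; let $d:=[K:F]<\infty$. The algebraic closure $k'$ of $k$ in $K$ sits inside $K$, and I want to prove $[k':k]\le d$.

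The main technical input I will need is the following linear disjointness statement: for every finite subextension $k\subset k''\subset k'$ one has
\[
[k''(x_1,\dots,x_n):k(x_1,\dots,x_n)]=[k'':k].
\]
To verify this, I first note that $x_1,\dots,x_n$ remain algebraically independent over $k''$: any polynomial relation over $k''$ would, after clearing denominators, lie in a finite algebraic extension of $k$, and so would give a polynomial relation over $k$, contradicting algebraic independence of $x_1,\dots,x_n$ over $k$. Hence a $k$-basis of $k''$ remains linearly independent over $F=k(x_1,\dots,x_n)$ (a standard fact: a finite algebraic extension stays linearly disjoint from a purely transcendental extension of the base), which yields the displayed equality.

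Given that, the rest is immediate. Since $k''(x_1,\dots,x_n)\subset K$, the tower $F\subset k''(x_1,\dots,x_n)\subset K$ gives
\[
[k'':k]=[k''(x_1,\dots,x_n):F]\le [K:F]=d.
\]
Because the bound $d$ is independent of the chosen finite subextension $k''\subset k'$, and $k'$ is the union of its finite subextensions of $k$, we conclude $[k':k]\le d<\infty$, proving the lemma.

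The only real content is the linear disjointness step; once that is in place, the rest is an elementary tower-of-degrees argument. I expect this to be the main (minor) obstacle, though it is entirely standard and can be dispatched by the observation that $x_1,\dots,x_n$ are algebraically independent over any algebraic extension of $k$ inside $K$.
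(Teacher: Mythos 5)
Your proposal is correct and follows essentially the same route as the paper's proof: choose a transcendence basis, form the purely transcendental subfield $K_1=k(x_1,\dots,x_n)$, and use the linear disjointness of algebraic extensions from purely transcendental ones to bound $[k':k]$ by $[K:K_1]$. The only cosmetic difference is that you exhaust $k'$ by its finite subextensions before applying the degree bound, whereas the paper applies the compositum argument to $k'$ directly.
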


\begin{proof}
The following elementary proof of Lemma \ref{const} was suggested by the referee. (My original proof was based on a
 theorem of Emmy Noether (\cite[Ch. IV, Sect. 4.2, Th. 4.14 on p. 127]{Eisenbud}) and used \cite[Ch. IV, Sect. 4.4, Prop. 4.15 on p, 129 and Cor. 4.17 on p. 131]{Eisenbud}).

Let $m$ be the transcendence degree of $K$ over $k$.
If $m=0$ then $K$ is algebraic over $k$ and the assertion is trivial. So, we may assume that $m\ge 1$.
 Let $\{x_1, . . . , x_m\} \subset K$
be a transcendental basis of $K$ over $k$ and let $K_1 := k(x_1, . . . , x_m)\subset K$ be the corresponding purely transcendental extension of $k$. Since $K$ is finitely generated over $k$ and algebraic over $K_1$, the degree $[K:K_1]$ is finite. Let us consider the compositum $k^{\prime} K_1 \subset K$  of $k^{\prime}$ and $K_1$. Since
$$K_1 \subset k^{\prime} K_1 \subset K,$$
the field $ k^{\prime} K_1$ has finite degree over $K_1$ (and $[k^{\prime}K_1:K_1]$ divides $[K:K_1]$).

 As algebraic
extensions and purely transcendental extensions are linearly disjoint,  $k^{\prime} K_1$  is isomorphic to $k^{\prime}\otimes_k K_1$ and hence
the (finite) degree $[k^{\prime}K_1:K_1] = [k^{\prime}: k]$. It follows that $k^{\prime}/k$ is also a finite algebraic field  extension.
\end{proof}

\begin{rem}
\label{separable}
The field $K$ is finitely generated over $k$ and therefore over
 $k^{\prime}$.
Suppose that $k$ is perfect. Since $k^{\prime}/k$ is finite algebraic, $k^{\prime}$ is also perfect. Since the perfect  (sub)field $k^{\prime}$ is algebraically closed in $K$, the field $K$ is separable over $k^{\prime}$ (see \cite[Appendix A1, Sect. A1.2 and Cor. A1.7 on p. 568]{Eisenbud}).
\end{rem}

\begin{lem}
\label{tensor}
Suppose $k$ is perfect. Let $\kappa/k^{\prime}$ be an algebraic field extension of finite degree.
Then $K\otimes_{k^{\prime}}\kappa$ is a field  and the field extension $(K\otimes_{k^{\prime}}\kappa)/K$ has degree $[\kappa:k^{\prime}]$. In particular, if $\kappa/k^{\prime}$ is a Galois extension
then $(K\otimes_{k^{\prime}}\kappa)/K$ is also a Galois extension and the natural map
$$\Gal(\kappa/k^{\prime}) \to \Gal((K\otimes_{k^{\prime}}\kappa)/K), \ \sigma \mapsto \{x\otimes \beta \mapsto x\otimes \sigma(\beta)\} $$
$$\ \forall \sigma \in \Gal(\kappa/k^{\prime}), x\in K, \beta \in \kappa$$
is an isomorphism of Galois groups.
\end{lem}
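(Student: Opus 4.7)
The plan is to reduce to a statement about a single irreducible polynomial splitting nicely after base change, using the fact from Remark \ref{separable} that $K/k^{\prime}$ is separable and $k^{\prime}$ is algebraically closed in $K$. Since $k^{\prime}$ is perfect and $\kappa/k^{\prime}$ is finite algebraic, $\kappa/k^{\prime}$ is separable, so by the primitive element theorem there is $\alpha \in \kappa$ with $\kappa = k^{\prime}(\alpha)$ and separable irreducible minimal polynomial $p(x) \in k^{\prime}[x]$ of degree $d = [\kappa:k^{\prime}]$. Then
$$K \otimes_{k^{\prime}} \kappa \;\cong\; K \otimes_{k^{\prime}} k^{\prime}[x]/(p(x)) \;\cong\; K[x]/(p(x)),$$
so the first claim reduces to proving that $p(x)$ stays irreducible when viewed in $K[x]$.

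The heart of the argument, and the main obstacle, is exactly this irreducibility assertion. Suppose $q(x) \in K[x]$ is a monic irreducible factor of $p(x)$ in $K[x]$. Since $p$ is separable, every root of $q$ in an algebraic closure $\bar{K}$ is a root of $p$, and is therefore algebraic over $k^{\prime}$. The coefficients of $q$ are the elementary symmetric functions of these roots, hence they lie in $K$ and are algebraic over $k^{\prime}$. Because $k^{\prime}$ is algebraically closed in $K$ (Remark \ref{separable}), these coefficients actually lie in $k^{\prime}$, so $q(x) \in k^{\prime}[x]$ divides $p(x)$ in $k^{\prime}[x]$. Irreducibility of $p$ over $k^{\prime}$ forces $q = p$, so $p$ is irreducible over $K$. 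Consequently $M := K \otimes_{k^{\prime}}\kappa = K[x]/(p(x))$ is a field and $[M:K] = d = [\kappa:k^{\prime}]$.

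For the Galois part, assume $\kappa/k^{\prime}$ is Galois. Then $p(x)$ splits completely in $\kappa[x]$, and via the canonical embedding $\kappa \hookrightarrow M$, $\beta \mapsto 1 \otimes \beta$, it splits in $M[x]$ as well. Since $M$ is generated over $K$ by a root of the separable polynomial $p$ and all of $p$'s roots lie in $M$, the extension $M/K$ is Galois of degree $d$. Each $\sigma \in \Gal(\kappa/k^{\prime})$ is $k^{\prime}$-linear, so the rule $x\otimes\beta \mapsto x\otimes\sigma(\beta)$ defines a $K$-algebra endomorphism of $M$; since $\sigma$ is invertible, so is its image. This yields a group homomorphism
$$\Gal(\kappa/k^{\prime}) \longrightarrow \Gal(M/K).$$
Composing with the restriction map $\Gal(M/K) \to \Gal(\kappa/k^{\prime})$, $\tau \mapsto \tau|_{1\otimes\kappa}$ (well-defined because any $\tau \in \Gal(M/K)$ permutes the roots of $p$ and hence stabilizes $1\otimes\kappa$, while it fixes $k^{\prime} \subset K$), one recovers the identity on $\Gal(\kappa/k^{\prime})$. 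Hence our homomorphism is an injective section of the restriction map, and a cardinality comparison
$$\#\Gal(\kappa/k^{\prime}) \;=\; d \;=\; [M:K] \;=\; \#\Gal(M/K)$$
shows it is an isomorphism. The routine part is the primitive element reduction and the Galois-group bookkeeping; the essential step is the algebraic-closedness argument that promotes the irreducible $K$-factor $q$ of $p$ back into $k^{\prime}[x]$.
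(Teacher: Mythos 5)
Your proof is correct, but it takes a genuinely different route from the paper's. The paper disposes of the key point by citation: since $K/k^{\prime}$ is separable (Remark \ref{separable}) and $k^{\prime}$ is algebraically closed in $K$, Exercise A1.2a of Eisenbud (applied to $R=K$, $S=\kappa$) gives directly that $K\otimes_{k^{\prime}}\kappa$ is a domain, hence a field because it is a finite-dimensional $K$-algebra of dimension $[\kappa:k^{\prime}]$; the Galois statement is then left as immediate. You instead give a self-contained elementary argument: the primitive element theorem (valid since $k^{\prime}$ is perfect) reduces everything to the irreducibility of the minimal polynomial $p$ over $K$, and you prove that by observing that the coefficients of any monic irreducible factor $q\in K[x]$ of $p$ are elementary symmetric functions of elements algebraic over $k^{\prime}$, hence lie in $K$ and are algebraic over $k^{\prime}$, hence lie in $k^{\prime}$ by algebraic closedness, forcing $q=p$. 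This is exactly the content hiding behind the Eisenbud exercise in this special (finite, separable) case, so nothing is lost; what you gain is independence from the reference and an explicit verification of the Galois-group isomorphism (section of the restriction map plus a cardinality count), which the paper omits. Note that your irreducibility step uses only that $k^{\prime}$ is algebraically closed in $K$, not the separability of $K/k^{\prime}$, which the paper's cited result does invoke; separability of $\kappa/k^{\prime}$ (from perfectness of $k^{\prime}$) is what you need for the primitive element theorem and for $M/K$ to be Galois. Both arguments are sound.
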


\begin{proof}
By Remark \ref{separable}, $K$ is separable over $k^{\prime}$.
By Exercise A.1.2a and its solution in \cite[pp. 568--569 and p. 749]{Eisenbud}  (applied to $R=K$ and $S=\kappa$) the tensor product
$K\otimes_{k^{\prime}}\kappa$ is a domain and therefore is a field, since it is a finite-dimensional $K$-algebra, whose dimension equals  $[\kappa:k^{\prime}]$.
\end{proof}

Lemma \ref{tensor} implies readily the following statement.

\begin{cor}
\label{GaloisPrime}
Suppose that $k$ is perfect and let us fix an algebraic closure $\overline{k^{\prime}}$ of $k^{\prime}$. Then $K\otimes_{k^{\prime}}\overline{k^{\prime}}$ is a field that is a Galois extension of $K=K\otimes 1$ and the Galois group $\Gal((K\otimes_{k^{\prime}}\overline{k^{\prime}})/K)$ is canonically isomorphic to the absolute Galois group $\Gal(\overline{k^{\prime}})=\Gal(\overline{k^{\prime}}/k^{\prime})$ of $k^{\prime}$.
\end{cor}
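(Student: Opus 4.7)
The plan is to reduce Corollary \ref{GaloisPrime} to the finite Galois case already settled by Lemma \ref{tensor}, via a filtered direct limit. First I would write
$$\overline{k^{\prime}}=\varinjlim_{\kappa}\kappa,$$
where $\kappa$ runs over the directed system of finite Galois subextensions of $\overline{k^{\prime}}/k^{\prime}$, and invoke the fact that tensor product commutes with direct limits to obtain
$$K\otimes_{k^{\prime}}\overline{k^{\prime}}=\varinjlim_{\kappa}\bigl(K\otimes_{k^{\prime}}\kappa\bigr).$$
By Lemma \ref{tensor}, each $K\otimes_{k^{\prime}}\kappa$ is a field, and whenever $\kappa_1\subset\kappa_2$ the induced $K$-algebra map $K\otimes_{k^{\prime}}\kappa_1\to K\otimes_{k^{\prime}}\kappa_2$ is a nonzero ring homomorphism between fields, hence injective. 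Thus $K\otimes_{k^{\prime}}\overline{k^{\prime}}$ is a filtered union of fields under field inclusions, and is therefore itself a field.

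Next I would establish the Galois assertion level by level. Lemma \ref{tensor} supplies, for each finite Galois $\kappa/k^{\prime}$, a canonical isomorphism
$$\Gal(\kappa/k^{\prime})\ \xrightarrow{\ \sim\ }\ \Gal\bigl((K\otimes_{k^{\prime}}\kappa)/K\bigr),\qquad\sigma\longmapsto\bigl\{x\otimes\beta\mapsto x\otimes\sigma(\beta)\bigr\},$$
and these are compatible with the restriction maps as $\kappa$ varies. Since $K\otimes_{k^{\prime}}\overline{k^{\prime}}$ is the filtered union of the finite Galois subextensions $(K\otimes_{k^{\prime}}\kappa)/K$, it is itself Galois over $K$, and passage to the inverse limit yields a canonical continuous isomorphism
$$\Gal\bigl((K\otimes_{k^{\prime}}\overline{k^{\prime}})/K\bigr)\ =\ \varprojlim_{\kappa}\Gal\bigl((K\otimes_{k^{\prime}}\kappa)/K\bigr)\ \xrightarrow{\ \sim\ }\ \varprojlim_{\kappa}\Gal(\kappa/k^{\prime})\ =\ \Gal(\overline{k^{\prime}}/k^{\prime}),$$
whose inverse is given by the same explicit formula $\sigma\mapsto\{x\otimes\beta\mapsto x\otimes\sigma(\beta)\}$.

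The only mildly delicate point, and the one I would expect to take a little care, is the stability of each finite Galois sub-extension under $K$-automorphisms of the ambient field: any $K$-automorphism $\tau$ of $K\otimes_{k^{\prime}}\overline{k^{\prime}}$ must permute the set $\{1\otimes\beta:\beta\in\kappa\}$, which is a set of Galois conjugates over $k^{\prime}$, and hence must preserve $K\otimes_{k^{\prime}}\kappa$. This is needed to justify the identification of $\Gal\bigl((K\otimes_{k^{\prime}}\overline{k^{\prime}})/K\bigr)$ with the displayed inverse limit. Beyond this bookkeeping, the argument is a purely formal passage to the direct/inverse limit from Lemma \ref{tensor}, and I do not anticipate any substantive obstacle.
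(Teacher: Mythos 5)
Your proposal is correct and is precisely the routine passage to the filtered colimit that the paper has in mind when it says Corollary \ref{GaloisPrime} ``follows readily'' from Lemma \ref{tensor} (the paper gives no further detail). Your careful points---injectivity of the transition maps between fields, and stability of each $K\otimes_{k^{\prime}}\kappa$ under $K$-automorphisms so that the Galois group is the expected inverse limit---are exactly the bookkeeping needed, and they all go through.
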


\begin{proof}[Proof of Theorem \ref{cyclotomy}]

The field $K$ is finitely generated over $\Q$. It follows from Lemma \ref{const} that the algebraic closure $\Q^{\prime}$ of $\Q$ in $K$ is an algebraic number field of finite degree
over $\Q$. Let us put $k=\Q^{\prime}$. Then $k$ is algebraically closed in $K$.  For all but finitely many primes $\ell$ the field extension
is unramified at all prime divisors of $\ell$. This implies that the ramification index of the field extension $k(\mu_{\ell^j})/k$ is, at least
$\varphi(\ell^j)=[\Q(\mu_{\ell^j}):\Q]$ at all prime divisor of $\ell$. (Here $\varphi$ is the Euler function.) This implies that $[k(\mu_{\ell^j}):k]=[\Q(\mu_{\ell^j}):\Q]$,
i.e., $k$ and $\Q(\mu_{\ell^j})$ are linearly disjoint
over $\Q$. By Lemma \ref{tensor}, $K\otimes_k k(\mu_{\ell^j})$ is a field that is an extension of $K$ of of degree $\varphi(\ell^j)$.
It follows that the natural surjective homomorphism of $k(\mu_{\ell^j})$-algebras
$K\otimes_k k(\mu_{\ell^j})\to K(\mu_{\ell^j})$ is injective and therefore is a field isomorphism. In particular, $K(\mu_{\ell^j})$ is a degree $\varphi(\ell^j)$ Galois extension
of $K$ and
$$\Gal(K(\mu_{\ell^j})/K)=\Gal(k(\mu_{\ell^j})/k)=\Gal(\Q(\mu_{\ell^j})/\Q)=(\Z/\ell^j\Z)^{*}.$$
\end{proof}

\begin{sect}
Now let us assume that $k$ is the prime finite field $\F_p$ of characteristic $p$. It follows from Lemma \ref{const} that $k^{\prime}$ is a finite field of characteristic $p$ and therefore the number $q^{\prime}=\#(k^{\prime})$   of its elements is a power of $p$. For every prime $\ell \ne p$ we write $N_p(\ell)$ (resp. $N^{\prime}(\ell)$)
the index in $(\Z/\ell\Z)^{*}$ of the cyclic multiplicative subgroup generated by $p \bmod \ell$ (resp. $q^{\prime}  \bmod \ell$). Clearly, $N_p(\ell)$ divides $N^{\prime}(\ell)$.
\end{sect}

The following assertion that is based on results of P. Moree \cite{Moree} will be proven at the end of this Section.

\begin{lem}
\label{indexUNBOUND}
The function $\ell \mapsto N_p(\ell)$ is an unbounded function in $\ell$.
\end{lem}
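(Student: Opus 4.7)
The plan is to use Chebotarev's density theorem applied to suitable Kummer extensions of $\Q$ to produce, for every positive integer $m$, infinitely many primes $\ell$ with $N_p(\ell)\ge m$; this immediately implies the unboundedness of $N_p$.

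Fix an integer $m\ge 1$ and let $L_m:=\Q(\zeta_m,p^{1/m})\subset\bar\Q$ be the splitting field of $x^m-p$ over $\Q$. Since $L_m$ contains $\zeta_m$, it is a finite Galois extension of $\Q$. A rational prime $\ell$ that is unramified in $L_m$ and different from $p$ splits completely in $L_m/\Q$ if and only if (i) $\ell\equiv 1\pmod m$ (equivalently, $\zeta_m$ reduces to an element of $\F_\ell^{*}$ of exact order $m$) and (ii) $p$ is an $m$-th power in $\F_\ell^{*}$. By Chebotarev's density theorem the set of such primes has positive Dirichlet density $1/[L_m:\Q]>0$, so in particular it is infinite. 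For any such $\ell$ the cyclic subgroup $\langle p\bmod\ell\rangle$ is contained in $(\F_\ell^{*})^m$, the unique subgroup of $\F_\ell^{*}$ of index $m$, whence $N_p(\ell)\ge m$. Letting $m\to\infty$ shows that $N_p$ is unbounded.

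Moree's work \cite{Moree} provides substantially finer quantitative information about the distribution of the indices $N_p(\ell)$ (densities of primes $\ell$ with prescribed index, analogues of Artin's primitive root conjecture, etc.), from which the qualitative unboundedness statement is an immediate corollary; the elementary Chebotarev argument above is presented only to make the proof self-contained. There is essentially no obstacle: the one point to verify is that $[L_m:\Q]$ is finite and positive for each $m$, which is obvious since $L_m$ is a finite extension of $\Q$ containing $\zeta_m$. Note that one need not compute $[L_m:\Q]$ exactly (in particular, one need not decide whether $x^m-p$ is irreducible over $\Q(\zeta_m)$): positivity of the Dirichlet density is all that is required.
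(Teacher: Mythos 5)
Your proof is correct, but it takes a genuinely different route from the paper. The paper disposes of the lemma in one line by citing Lemma~4 of Section~2 of Moree's paper (applied to $g=p$), which asserts that for every positive integer $t$ the set of primes $\ell$ with $t\mid N_p(\ell)$ is infinite (indeed of positive density); unboundedness is then immediate. You instead give a self-contained argument: for each $m$ you take the Kummer extension $L_m=\Q(\zeta_m,p^{1/m})$ and observe that any prime $\ell\nmid mp$ splitting completely in $L_m$ satisfies $\ell\equiv 1\pmod m$ and $p\in(\F_\ell^{*})^m$, so that $\langle p\bmod\ell\rangle$ lies in the index-$m$ subgroup $(\F_\ell^{*})^m$ and hence $N_p(\ell)\ge m$ (in fact $m\mid N_p(\ell)$, so you recover exactly the divisibility statement the paper quotes from Moree); Chebotarev supplies infinitely many such $\ell$. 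The one hypothesis worth making explicit is that $p$, being a prime, is not $0$ or $\pm 1$, so nothing degenerates, and your remark that only positivity of the density (not the exact degree $[L_m:\Q]$) is needed is well taken --- in fact even the infinitude of completely split primes would suffice. What your approach buys is independence from the external reference; what the paper's citation buys is brevity and the finer density information, which, as you note, is not needed for the qualitative conclusion.
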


\begin{thm}
\label{unboundK}
\begin{itemize}
\item[(i)]
for all primes $\ell \ne p$ the image
$$\bar{\chi}_{\ell,K}(\Gal(K))\subset (\Z/\ell\Z)^{*}$$
is the cyclic multiplicative  subgroup generated by $q^{\prime} \bmod \ell$.
\item[(ii)]
Let $N_K(\ell)$ be the index $[(\Z/\ell\Z)^{*}:\bar{\chi}_{\ell}(\Gal(K))]$. Then the function $\ell \mapsto N_K(\ell)$ is an unbounded function in $\ell$.
\end{itemize}
\end{thm}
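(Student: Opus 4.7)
The plan is to apply Corollary \ref{GaloisPrime} with $k=\F_p$ in order to reduce the cyclotomic character on $\Gal(K)$ to the cyclotomic character on $\Gal(k'/k')$, where $k'=\F_{q'}$ is the algebraic closure of $\F_p$ in $K$. Once part (i) is established, part (ii) will follow from the elementary inequality $N_K(\ell)\ge N_p(\ell)$ together with Lemma \ref{indexUNBOUND}.

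To carry out (i), I would first note that since $\F_p$ is perfect, Lemma \ref{const} applies and $k'=\F_{q'}$ is a finite field with $q'=p^r$ for some $r\ge 1$. Fix an algebraic closure $\overline{k'}\subset \bar K$. By Corollary \ref{GaloisPrime}, the compositum $K\cdot\overline{k'}$ (identified with $K\otimes_{k'}\overline{k'}$) is a Galois extension of $K$ whose Galois group is canonically isomorphic to $\Gal(\overline{k'}/k')$, the latter being topologically generated by the $q'$-power Frobenius $\phi\colon x\mapsto x^{q'}$. For any prime $\ell\ne p$ the group $\mu_\ell$ consists of roots of unity algebraic over $\F_p$, hence lies in $\overline{k'}$; consequently $K(\mu_\ell)\subset K\cdot \overline{k'}$ and the $\Gal(K)$-action on $\mu_\ell$ factors through $\Gal(\overline{k'}/k')$. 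Since $\phi$ acts on $\mu_\ell$ as multiplication by $q' \bmod \ell$, I conclude
$$\bar\chi_{\ell,K}(\Gal(K))=\langle q'\bmod \ell\rangle\subset (\Z/\ell\Z)^{*}.$$

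For (ii) I would use $q'=p^r$ to write $\langle q'\bmod\ell\rangle=\langle p^r\bmod\ell\rangle\subseteq\langle p\bmod\ell\rangle$, which gives the index inequality
$$N_K(\ell)=[(\Z/\ell\Z)^{*}:\langle p^r\bmod \ell\rangle]\ge [(\Z/\ell\Z)^{*}:\langle p\bmod\ell\rangle]=N_p(\ell).$$
An appeal to Lemma \ref{indexUNBOUND}, which asserts that $N_p(\ell)$ is unbounded in $\ell$, then forces $N_K(\ell)$ to be unbounded as well. The only real technical input is the splitting provided by Corollary \ref{GaloisPrime} (to identify the cyclotomic character of $K$ with that of $k'$); the main substantive obstacle, the unboundedness statement for $N_p(\ell)$, is outsourced to Lemma \ref{indexUNBOUND} (which ultimately rests on the work of Moree cited in the excerpt).
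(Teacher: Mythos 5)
Your proposal is correct and follows essentially the same route as the paper: part (i) is obtained by factoring $\bar\chi_{\ell,K}$ through the restriction map $\Gal(K)\twoheadrightarrow\Gal(k')$ furnished by Corollary \ref{GaloisPrime} (using that $\mu_\ell\subset\overline{k'}$ and that Frobenius acts as $q'\bmod\ell$), and part (ii) from the divisibility/index comparison $N_K(\ell)=N'(\ell)\ge N_p(\ell)$ together with Lemma \ref{indexUNBOUND}. No substantive differences to report.
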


\begin{proof}[Proof of Theorem \ref{unboundK} (modulo Lemma \ref{indexUNBOUND})]
Since $k^{\prime}\subset \bar{K}$, the algebraic closure of $k^{\prime}$ in $\bar{K}$ is an  algebraically closed field and will be denoted by $\overline{k^{\prime}}$.
It follows from Corollary \ref{GaloisPrime} that there is the natural continuous surjective group homomorphism of absolute Galois groups
$$\rest: \Gal(K) \twoheadrightarrow \Gal(k^{\prime})$$
where for each automorphism $\sigma$ of $\bar{K}/K$ we write $\rest(\sigma)$ for its restriction to $\overline{k^{\prime}}$.  We need to distinguish between two cyclotomic characters
$$\bar{\chi}_{\ell,K}: \Gal(K) \to (\Z/\ell\Z)^{*}$$
and
$$\bar{\chi}_{\ell,k^{\prime}}: \Gal(k^{\prime}) \to (\Z/\ell\Z)^{*}$$
that define the action on $\ell$th roots of unity of  $\Gal(K)$ and $\Gal(k^{\prime})$ respectively. However, since all $\ell$th roots of unity of $\bar{K}$ lie in $\overline{k^{\prime}}$,
$$\bar{\chi}_{\ell,K}=\bar{\chi}_{\ell,k^{\prime}}\ \rest: \Gal(K) \twoheadrightarrow   \Gal(k^{\prime}) \to (\Z/\ell\Z)^{*} ;$$
in particular, both cyclotomic characters have the same image in $ (\Z/\ell\Z)^{*}$. Since $\Gal(k^{\prime})$ is generated (as the topological group) by the Frobenius automorphism that sends every element of $\overline{k^{\prime}}$ (including all $\ell$th roots of unity) to its $q^{\prime}$th power, the image
$$\bar{\chi}_{\ell,,k^{\prime}}(\Gal(k^{\prime}))\subset (\Z/\ell\Z)^{*}$$
is the cyclic multiplicative  subgroup generated by $q^{\prime} \bmod \ell$. It follows that the image
$$\bar{\chi}_{\ell}(\Gal(K))\subset (\Z/\ell\Z)^{*}$$
is the cyclic multiplicative  subgroup generated by $q^{\prime} \bmod \ell$,
 i.e., we proved the first assertion of our Theorem. In particular, $N_K(\ell)$ coincides with $N^{\prime}(\ell)$. Recall  that $N^{\prime}(\ell)$ is a positive integer that is divisible by 
%%%%%%%%%%%%%%%
$N_p(\ell)$.
%%%%%%%%%%%%%%%%%
 It follows from Lemma \ref{indexUNBOUND}
that the function
$$\ell \mapsto N^{\prime}(\ell)=N_K(\ell)$$
is an unbounded function in $\ell$.
\end{proof}

\begin{proof}[Proof of Lemma \ref{indexUNBOUND}]
Applying Lemma 4 of Section 2 in   \cite{Moree} (to $g=p$), we conclude that for every positive integer $t$ the set of primes $\ell$ such that  $t$ divides $N_p(\ell)$ is infinite. (Actually,  it is proven in \cite{Moree} that this set of primes  has a positive density.) In particular, for each $t$ there is a prime $\ell \ne p$ with $N_p(\ell) \ge t$.  This means that the function $\ell \mapsto N_p(\ell)$ is unbounded.
\end{proof}

\section{Nonisomorphic hyperelliptic curves and  jacobians}
\label{fractional}
We start to prove Theorem \ref{distinctJAC}. Replacing $K$ by its {\sl perfectization}, we may and will assume that $K$ is a perfect field.

It is well known (\cite[Ch. 2, Sect. 3, pp. 253--255]{GH}, \cite[Ch. VIII,
Sect. 3]{Dolgachev}) that the hyperelliptic curves $C_{f_1}$ and $C_{f_2}$ are
isomorphic over $\bar{K}$ if and only if there exists a fractional linear
transformation $T\in \PGL_2(\bar{K})=\Aut(\PP^1)$ that sends the branch points
of the canonical double cover $C_{f_1}\to \PP^1$ to the branch points of the
canonical double cover $C_{f_2}\to \PP^1$. If $\RR\subset \bar{K}$ is the set of roots of $u(x)$ then the corresponding sets of branch points are the disjoint unions $\RR \cup B_1$ and $\RR\cup B_2$ respectively.

Assume that $J(C_{f_1})$ and $J(C_{f_2})$ are isomorphic over $\bar{K}$. We need to get a contradiction. We know that $\End(J(C_{f_1}))=\Z$ and $\End(J(C_{f_2})))=\Z$. This implies that
both jacobians $J(C_{f_1})$ and $J(C_{f_2})$ have exactly one principal
polarization and therefore a $\bar{K}$-isomorphism  of abelian varieties
$J(C_{f_1})\cong J(C_{f_2})$ respects the principal polarizations. Now the
Torelli theorem implies that the hyperelliptic curves $C_{f_1}$ and $C_{f_2}$
 are isomorphic over $\bar{K}$. Therefore there exists a fractional linear
transformation $T\in \PGL_2(\bar{K})=\Aut(\PP^1)$ such that
$$T(\RR \cup B_1)=\RR\cup B_2.$$
Suppose that $T$ is defined over $K$, i.e., lies in $\PGL_2(K)$. Then $T$     commutes with the Galois action on $\bar{K}$ and therefore sends every Galois orbit in $\bar{K}$ onto another Galois orbit.  This implies that
$T$ sends into itself the $2g$-element Galois orbit $\RR$;
in addition, $T(B_1)=B_2$. Since
$$\Alt(\RR)\subset \Gal(u)\subset \Perm(\RR)$$  and the only permutation of $\RR$ that commutes with all even permutations is the identity map, we conclude that $T$ acts as the identity map on $\RR$. Since the number of elements in $\RR$ is greater or equal than $2g\ge 4>3$, we conclude that $T$ is the identity element of $\PGL_2(\bar{K})$ and therefore
$B_2=T(B_1)=B_1$, which is not the case. We obtained a contradiction but only under an additional assumption that $T$ lies  in $\PGL_2(K)$. Now assume that $T$ does {\sl not}  lie in $\PGL_2(K)$. It follows from Hilbert's Theorem 90 that there is a Galois automorphism $\sigma \in \Gal(K)$ such that $\sigma(T)\ne T$. On the other hand, since both sets
$\RR \cup B_1$ and $\RR \cup B_2$ are Galois-invariant,
$$\sigma(T)(\RR \cup B_1)=\RR\cup B_2.$$
If we put $U:=T^{-1}\sigma(T)\in \PGL_2(\bar{K})$ then $U$ does {\sl not} coincide with the identity automorphism of $\PP^1$ but $U(\RR \cup B_1)=\RR \cup B_1$. This implies that $U$ gives rise to a nontrivial automorphism of $C_{f_1}$ that is {\sl not} the {\sl hyperelliptic involution}. By functoriality, we obtain an automorphism of the abelian variety $J(C_{f_1})$ that is neither $1$ nor $-1$. This gives us a contradiction, because
$$\Aut(J(C_{f_1}))=\End(J(C_{f_1}))^{*}=\Z^{*}=\{\pm 1\}.$$
This ends the proof of Theorem \ref{distinctJAC}.

\section{Concluding remarks}
\label{conclude}
Let $K$ and $f(x)$ be as in Theorem \ref{mainENDO}. Let us put $X=J(C_f)$. We know that  $\End(X)=\Z$ and $X$ has somewhere a semistable reduction with toric dimension one.

Now assume that $K$ is finitely generated over its prime subfield and let  $\ell$ be a prime different from $\fchar(K)$. It follows from arguments of Remark \ref{TateHodge}(ii)
that
for every finite algebraic field extension $K^{\prime}/K$  and each
self-product $X^m$ of $X$ every $\ell$-adic Tate class on $X^m$ can be presented as a linear combination of products of divisor classes on $X^m$. In particular, the Tate conjecture
 holds true for all $X^m$ in all codimensions.

Assume additionally that $\fchar(K)=0$  and fix an embedding $\bar{K}\subset \C$. Then  arguments of Remark \ref{TateHodge}(ii) imply that for each
self-product $X^m$ of $X$ every Hodge class on $X^m$ can be presented as a linear combination of products of divisor classes on $X^m$. In particular, the Hodge conjecture
 holds true for every $X^m$ in all codimensions. In addition, the Mumford-Tate conjecture holds true for $X$.

\end{document}